\documentclass{amsart}

\usepackage{graphicx}
\usepackage[centertags]{amsmath}
\usepackage{amsfonts}
\usepackage{amssymb}
\usepackage{amsthm}
\usepackage{newlfont}
\usepackage{hyperref}

\newtheorem{thm}{Theorem}[section]

\newtheorem{lem}[thm]{Lemma}
\newtheorem{prop}[thm]{Proposition}

\theoremstyle{definition}

\theoremstyle{remark}

\numberwithin{equation}{section}

\newcommand{\N}{\mathbb{N}}
\newcommand{\Z}{\mathbb{Z}}
\newcommand{\R}{\mathbb{R}}

\newcommand{\acts}{\curvearrowright}
\newcommand{\symd}{\triangle}

\newcommand{\Sym}{\mathrm{Sym}}

\newcommand{\sH}{\mathrm{H}}

\newcommand{\juxt}{\frown}
\renewcommand{\:}{\,:\,}
\newcommand{\res}{\restriction}

\begin{document}

\title[Ergodic actions and finite generators]{Ergodic actions of countable groups and finite generating partitions}

%Ergodic Actions of Countable Groups and Finite Generating Partitions
%A Krieger Finite Generator Theorem for Countable Groups
%Non-amenable groups and Krieger's Finite Generator Theorem
%A Finite Generator Theorem for Actions of Countable Groups
%Finite Generators for Actions of Countable Groups
%On Finite Generators for Actions of Countable Groups

\author{Brandon Seward}
\address{Department of Mathematics, University of Michigan, 530 Church Street, Ann Arbor, MI 48109, U.S.A.}
\email{b.m.seward@gmail.com}
\keywords{finite generator, generating partition, Shannon entropy, Krieger's finite generator theorem, ergodic, countable groups, f-invariant, sofic}
%\subjclass{37A15} %General groups of measure-preserving transformations
%\subjclass{37A35} %Entropy and other invariants, isomorphism, classification
%\thanks{This research was supported by a National Science Foundation Graduate Research Fellowship}

\begin{abstract}
We prove that if an ergodic action of a countable group on a probability space admits a generating partition having finite Shannon entropy then it admits a finite generating partition.
\end{abstract}
\maketitle

\section{Introduction}

Let $G$ be a countable group acting by measure preserving bijections on a probability space $(X, \mu)$. For a partition $\alpha$ of $X$, we denote by $G \cdot \alpha$ the smallest $\sigma$-algebra containing the sets $g \cdot A$ for $g \in G$ and $A \in \alpha$. The partition $\alpha$ is \emph{generating} (or a \emph{generator}) if for every measurable set $B \subseteq X$ there is some $B' \in G \cdot \alpha$ with $\mu(B \symd B') = 0$. The \emph{Shannon entropy} of a partition $\alpha$ is
$$\sH(\alpha) = \sum_{A \in \alpha^*} - \mu(A) \cdot \log(\mu(A)).$$
if there is a countable subcollection $\alpha^* \subseteq \alpha$ with $\mu(\cup \alpha^*) = 1$, and otherwise $\sH(\alpha) = \infty$.

A classical theorem of ergodic theory is Krieger's finite generator theorem \cite{Kr70}. This theorem states that if $k$ is an integer and $\Z \acts (X, \mu)$ is an ergodic action with Kolmogorov--Sinai entropy less than $\log(k)$, then the action $\Z \acts (X, \mu)$ admits a finite generating partition of size $k$. Less known is a similar but earlier result of Rohlin \cite{Roh67}. Rohlin proved that for any essentially free action $\Z \acts (X, \mu)$, the Kolmogorov--Sinai entropy of this action is equal to the infimum of the Shannon entropies of the generating partitions. A result stronger than both Rohlin's theorem and Krieger's theorem was obtained by Denker in \cite{D74}.

Krieger's finite generator theorem extends to essentially free ergodic actions of general countable amenable groups. This more general version of Krieger's theorem was stated by \v{S}ujan in \cite{Su83} and proved by Danilenko and Park in \cite{DP02} with the more restrictive requirement that the Kolmogorov--Sinai entropy be less than $\log(k-1)$ (Rosenthal \cite{Ros88} proved this using $\log(k-2)$ and Thouvenot \cite{Th75} proved this for actions of $\Z^n$ using $\log(k-2)$). The extension of Rohlin's theorem to essentially free actions of amenable groups is not in the literature, but it follows from the techniques of Danilenko and Park in \cite{DP02}. So in the setting of actions of amenable groups, the behavior of generating partitions is well understood.

Entropy theory recently has been extended beyond the realm of actions of amenable groups. In 2008, Lewis Bowen defined (f-invariant) entropy for actions of finitely generated free groups \cite{B10a} and (sofic) entropy for actions of sofic groups \cite{B10b}. The definition of sofic entropy was later expanded by Kerr--Li \cite{KL11a} (see also \cite{Ke}). Sofic entropy and f-invariant entropy have strong similarities with Kolmogorov--Sinai entropy, and in fact when the acting group is amenable these entropies agree with Kolmogorov--Sinai entropy. Furthermore, f-invariant entropy is essentially a special case of sofic entropy. The theories of sofic entropy and f-invariant entropy, being quite new, are currently poorly understood. In particular, it is not clear what relationships sofic entropy and f-invariant entropy have with generating partitions. As with Kolmogorov--Sinai entropy, sofic entropy and f-invariant entropy are easier to use, define, and compute when there are generating partitions with finite Shannon entropy (in fact, the definition of f-invariant entropy still requires a generating partition having finite Shannon entropy). So theorems along the lines of Krieger's theorem and Rohlin's theorem mentioned above would certainly benefit the theories of sofic entropy and f-invariant entropy. Thus the question arises as to what can be said about generating partitions outside of the realm of actions of amenable groups.

Although we draw motivation from sofic entropy and f-invariant entropy (which deal with actions of sofic groups and finitely generated free groups, respectively), our main theorem deals with actions of general countable groups. We prove the following finite generator theorem.

\begin{thm} \label{INTRO THM}
Let $G$ be a countable group acting ergodically by measure preserving bijections on a standard probability space $(X, \mu)$. If this action admits a generating partition having finite Shannon entropy, then it admits a finite generating partition.
\end{thm}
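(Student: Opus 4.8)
The plan is to represent the system symbolically and then re-encode the countable generator into a finite one by \emph{spreading} information along orbits, using a sparse hierarchy of markers to synchronise the decoding. Fix once and for all an enumeration $G=\{g_0,g_1,\dots\}$ with $g_0=1_G$.

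First I would dispose of the trivial case. The function $x\mapsto|Gx|$ is $G$-invariant, so by ergodicity it is a.e.\ equal to some $N\in\N\cup\{\infty\}$; if $N<\infty$ one checks that $(X,\mu)$ must be essentially a transitive action of $G$ on an $N$-point set (a non-atomic ergodic action cannot have all orbits finite), and then the partition into points is a finite generator. So I may assume a.e.\ orbit is infinite, i.e.\ the orbit equivalence relation $E$ is aperiodic. Now let $\alpha$ be the given generator and, using $\sH(\alpha)<\infty$, enumerate its atoms as $A_1,A_2,\dots$ with $\mu(A_1)\ge\mu(A_2)\ge\cdots$. Since $n\mu(A_n)\le 1$ we get $-\log\mu(A_n)\ge\log n$, hence $\sum_n\mu(A_n)\log n\le\sH(\alpha)<\infty$; fixing a prefix-free binary code $n\mapsto c_n\in\{0,1\}^{\ell_n}$ with $\ell_n\le-\log_2\mu(A_n)+1$ gives moreover $\sum_n\mu(A_n)\ell_n<\infty$. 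Let $f:X\to\N$ be the index function, $f(x)=n\iff x\in A_n$. Since $\alpha$ generates, it suffices to produce a \emph{finite} partition $\beta$ with $f$ being $G\cdot\beta$-measurable: applying $g^{-1}$ then shows every $gA_n\in G\cdot\beta$, so $G\cdot\beta\supseteq G\cdot\alpha$ is the whole $\sigma$-algebra. Equivalently, under the $G$-equivariant measure isomorphism $x\mapsto(f(g^{-1}x))_{g\in G}$ of $(X,\mu)$ onto a shift-invariant measure on $\N^G$, I want a finite-valued observable whose $G$-translates recover the configuration.

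For the construction I would proceed as follows. Using the marker lemma for aperiodic equivalence relations together with continuity of $\mu$, fix a Borel complete section $M_0\subseteq X$ (meeting a.e.\ orbit) with $\mu(M_0)$ arbitrarily small, and extract from it a canonically determined nested sequence of complete sections $M_0\supseteq M_1\supseteq\cdots$, each $M_{k+1}$ defined from $M_k$ and $E$ alone. Pushing the enumeration of $G$ onto each orbit, every orbit point acquires, at each level $k$, a ``governing'' $M_k$-marker and a finite ``cluster'' around it; by the mass-transport principle in $E$ these clusters exceed any prescribed size outside a set of measure $<\delta_k$, with $\sum_k\delta_k$ as small as we like. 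The finite partition $\beta$ records two bounded quantities at each point: membership in $M_0$ (from which all $M_k$ and all cluster structure along the orbit are recovered), and one symbol of a fixed finite ``payload'' alphabet. Using the cluster hierarchy and the pushed enumeration as an intrinsic, basepoint-free coordinate system on each orbit, the payload symbols are laid out so as to spell out, in a self-delimiting fashion, for every orbit point $y$ a description of $f(y)$ (first the length $\ell_{f(y)}$, then the codeword $c_{f(y)}$), placed inside the smallest level-$k$ cluster around $y$ large enough to accommodate all points routed to it; the measure-zero set of points that never find a large enough cluster is dumped into a fixed exceptional atom. Since the entire layout and all routing decisions depend only on the marker pattern and the orbit geometry, the decoder, given the $\beta$-name of $x$ (which displays all of $\beta$ along $x$'s orbit, positioned relative to $x$), reconstructs $M_0$, hence every cluster, hence $c_{f(y)}$ and $f(y)$ for all $y$ in the orbit; in particular $f(x)$ is $G\cdot\beta$-measurable, so $\beta$ is a finite generator.

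I expect the main obstacle to be precisely the step where, for amenable acting groups, one would invoke the Rohlin lemma or F\o{}lner tilings together with the pointwise ergodic theorem: for a general countable group there is no tower structure and no control over ``densities'' along individual orbits, so the coding scheme must survive clusters of entirely uncontrolled shape. This is what forces the hierarchy of markers — a point whose low-level cluster is too cramped is promoted to a larger one, the promotions terminating almost surely by a Borel--Cantelli estimate built from the $\delta_k$ — and it forces the layout and routing to be genuinely self-synchronising and to depend on geometry only, since otherwise the decoder faces a circular dependence between ``which codewords are stored in this cluster'' and ``what those codewords say''. Two secondary points also need care: making every selection Borel and $G$-equivariant (so that $\beta$ is an honest partition) via measurable selection, and handling non-free actions, where an orbit is the transitive $G$-set $G/\Stab(x)$ rather than $G$ itself; but the construction only ever uses the orbit as an abstract transitive $G$-set equipped with the pushed well-order, and $\Stab(x)$ is measurably determined and, by ergodicity, behaves uniformly, so the argument goes through unchanged.
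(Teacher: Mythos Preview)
Your high-level strategy---encode the atoms of $\alpha$ by finite words of finite expected length and then redistribute this variable-length data along each orbit so that every point carries only boundedly many symbols---is precisely the idea behind the paper's proof. The gap is in your redistribution mechanism. You want each point's codeword stored in the smallest cluster of the marker hierarchy that can ``accommodate all points routed to it'', with the promotions terminating a.e.\ by Borel--Cantelli. Mass transport does give that level-$k$ clusters have size $\geq N$ off a set of measure $\leq N\mu(M_k)$, so clusters can be made \emph{large}; but large is not the same as \emph{roomy}. A cluster $C$ must absorb $\sum_{z\in C}\ell_{f(z)}$ symbols, and this overflows the capacity $P\cdot|C|$ exactly when the average codeword length inside $C$ exceeds $P$. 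Mass transport yields only $\int_X |C(y)|^{-1}\sum_{z\in C(y)}\ell_{f(z)}\,d\mu(y)=\int \ell_f\,d\mu=\bar\ell$, so Markov bounds the overloaded set by $\bar\ell/P$---a constant \emph{independent of the level}. Without a pointwise ergodic theorem (which, as you correctly note, is unavailable for general $G$) there is no mechanism forcing these bad sets to shrink as the hierarchy deepens, let alone summably: a nested tower of ever-larger clusters can stay permanently overloaded if long-word points happen to accumulate together. The circularity you flag between routing and content is a symptom of the same missing ingredient, and making the routing ``depend on geometry only'' does not supply it.

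The paper sidesteps clusters entirely. Its engine is an elementary matching lemma (Lemma~\ref{LEM BIJECT}): for measurable $A,B\subseteq X$, ergodicity alone produces a measurable bijection $\psi(A,B)$ from essentially all of the smaller set into the larger, with $\psi(A,B)(x)\in G\cdot x$, and---this is the decisive point---built so that if $x,y$ have identical $A$- and $B$-patterns along their orbits then $\psi$ moves them by the \emph{same} group element. Setting $B_n=\{x:|L(x)|\geq C+n\}$ (so $\sum_n\mu(B_n)<\tfrac14$) and iterating, one ships the $(C{+}n)$th letter of each overlong word to a spare short-word point, marking the transfer with a fourth symbol; the resulting $R:X\to\{1,2,3,4\}^{\leq C+2}$ has finite image. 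Because every $\psi$ is constructed from the sets $B_n$ alone and respects their orbit-symmetries, two points with the same $R$-pattern along their orbits are forced to have the same $L$-pattern, hence the same $\alpha$-pattern, and the induced finite partition generates. This global matching never asks any region of an orbit to exhibit typical statistics; it uses ergodicity only in the form ``if $A,B$ have positive measure then some $g\cdot A$ meets $B$'', which is exactly what survives for arbitrary countable $G$.
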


We mention that although our proof is constructive, it does not immediately imply any relationship between the Shannon entropy of the original partition and the size of the finite partition constructed.

We obtained the above theorem while trying to establish a Krieger finite generator theorem for f-invariant entropy. However, we found that Krieger's theorem and Rohlin's theorem fail in this setting.

For an action $G \acts (X, \mu)$ we let $\Delta^*_G(X, \mu)$ denote the infimum of the Shannon entropies of the generating partitions (this is $+\infty$ if there are no such partitions) and we let $\Delta_G(X, \mu)$ denote the smallest size of a finite generating partition (again this is $+\infty$ if there are no such partitions). If $G$ is a finitely generated free group then we denote the f-invariant entropy of the action $G \acts (X, \mu)$ by $f_G(X, \mu)$. We remark that $f_G(X, \mu)$ is only defined when there is a generating partition having finite Shannon entropy. When $f_G(X, \mu)$ is defined it takes values in $\R \cup \{-\infty\}$.

\begin{prop} \label{INTRO PROPF}
Let $G$ be a finitely generated non-cyclic free group. For every $h \in \R$,
$$\sup_{G \acts (X, \mu)} \Delta^*_G(X, \mu) = \sup_{G \acts (X, \mu)} \Delta_G(X, \mu) = + \infty,$$
where the supremums are taken over all essentially free ergodic probability measure preserving actions $G \acts (X, \mu)$ with $f_G(X, \mu)$ defined and $f_G(X, \mu) = h$.
\end{prop}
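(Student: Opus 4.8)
The plan is to construct, for every real number $h$ and every natural number $n$, an essentially free ergodic action of $G$ with $f_G = h$ but with no generating partition of Shannon entropy $\le n$. Since $f$-invariant entropy behaves additively (up to an appropriate correction) under taking direct products with Bernoulli shifts, the natural strategy is to start from a family of actions whose $\Delta^*_G$ (equivalently, by Theorem~\ref{INTRO THM}, whose $\Delta_G$) is large, and then tune the $f$-invariant entropy to the desired value $h$ by multiplying by a suitable Bernoulli shift. The obstacle is therefore twofold: first, to produce essentially free ergodic actions with arbitrarily large $\Delta^*_G$ while keeping $f_G$ defined and controlled; second, to verify that the tuning operation does not destroy the lower bound on $\Delta^*_G$.

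For the first part I would look for actions with large "cost-like" or "rank-like" behavior. A clean source is Bernoulli shifts over large alphabets: if $b$ is large, the Bernoulli shift $G \acts (\K^G, \nu^G)$ with $|\K| = b$ requires many atoms in any generator. One can make this precise: a finite generating partition of size $k$ gives a $G$-equivariant factor map onto a subshift of $\{1,\dots,k\}^G$, and entropy/combinatorial considerations force $k$ to grow with $b$. Concretely, the canonical partition of $\K^G$ into cylinders over the identity coordinate has Shannon entropy $\log b$, so $\Delta^*_G(\K^G,\nu^G) \le \log b$; the point is to show that $\Delta^*_G(\K^G,\nu^G) \to \infty$ as $b \to \infty$, which follows because $f$-invariant entropy (hence Kolmogorov–Sinai entropy along any finite subgroup, or simply a Shannon-inequality argument using that $\sH(\alpha)$ must dominate the entropy of the factor) provides a lower bound of the form $\Delta^*_G \ge c \log b$ for an absolute constant $c>0$. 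Thus one gets actions with $\Delta^*_G$ as large as desired.

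Now fix $n$; choose $b$ so large that $\Delta^*_G(\K^G,\nu^G) > n$, and consider the direct product $X = \K^G \times \L^G$ with a Bernoulli shift factor $\L^G$ whose alphabet distribution is chosen so that the $f$-invariant entropy of the product equals $h$. Using the known formula for the $f$-invariant entropy of a direct product of Bernoulli shifts over a rank-$r$ free group (it is additive in the appropriate sense, and can be driven to any value in $\R \cup \{-\infty\}$ by a continuum of choices of the $\L$-distribution, including distributions with countably infinite support), one realizes any target $h$. The product remains essentially free (a product with a free action is free) and ergodic (product of ergodic Bernoulli actions is ergodic). It remains to see that $\Delta^*_G(X) > n$: any generator of the product, when pushed forward to the factor $\K^G$, is a generator there, and push-forward does not increase Shannon entropy; hence $\Delta^*_G(X) \ge \Delta^*_G(\K^G,\nu^G) > n$. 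Since $n$ was arbitrary this gives the supremum $+\infty$ for $\Delta^*_G$, and Theorem~\ref{INTRO THM} upgrades this to the same conclusion for $\Delta_G$.

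The main obstacle I anticipate is the $f$-invariant-entropy bookkeeping: one must ensure simultaneously that $f_G$ is \emph{defined} for the product (which requires a generating partition of finite Shannon entropy — here the product of the two canonical cylinder partitions works, with finite entropy provided the $\L$-distribution is chosen with finite Shannon entropy, or a limiting argument is used), and that its value is exactly $h$. Hitting $h = -\infty$ in particular requires an $\L$-Bernoulli factor whose base distribution has infinite Shannon entropy yet still admits a finite generator on the product — this is exactly the kind of phenomenon Theorem~\ref{INTRO THM} is designed to produce, so I would invoke it (or its proof) to guarantee the product still has a finite generator, and then compute $f_G$ using that finite generator. Carefully matching these constraints against the additivity formula for $f$-invariant entropy of products is the technical heart of the argument; everything else (freeness, ergodicity, the push-forward inequality for Shannon entropy) is routine.
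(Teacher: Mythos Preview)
Your approach has a genuine gap that makes it unworkable. The construction you propose --- a direct product of two $G$-Bernoulli shifts $\K^G \times \L^G$ --- is itself isomorphic to the Bernoulli shift $(\K\times\L)^G$ with base measure $\nu_\K\times\nu_\L$. For any Bernoulli shift over a free group one has
\[
f_G\big((\K\times\L)^G,(\nu_\K\times\nu_\L)^G\big)=\sH(\nu_\K\times\nu_\L)=\sH(\nu_\K)+\sH(\nu_\L)\ \ge\ 0,
\]
and the canonical cylinder partition realizes this, so in fact $\Delta^*_G = f_G = \sH(\nu_\K)+\sH(\nu_\L)$ for every such example. Thus your scheme never produces a gap between $\Delta^*_G$ and $f_G$: once you force $f_G(X,\mu)=h$, you have simultaneously forced $\Delta^*_G(X,\mu)=h$, not something large. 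In particular the claim that the $\L$-factor can ``be driven to any value in $\R\cup\{-\infty\}$'' is false --- a Bernoulli factor has nonnegative $f$-invariant entropy and only \emph{raises} the entropy of the product. The subsidiary ``push-forward'' step is also not correct as stated (a partition of $X$ does not push forward to a partition of a factor), though a legitimate replacement is the bound $\Delta^*_G(X,\mu)\ge f_G(X,\mu)$; unfortunately that bound again just returns $h$.

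The paper's argument supplies exactly the missing mechanism: it manufactures a gap between $\Delta^*_G$ and $f_G$ by inducing from a finite-index normal subgroup $K\lhd G$. One takes a $K$-Bernoulli shift with Shannon entropy tuned to $n h + n(r-1)\log n$ and lets $(X,\mu)$ be the induced $G$-action. The subgroup formula $f_G(X,\mu)=\tfrac{1}{n}f_K(X,\mu)$ together with the ergodic decomposition formula (which subtracts $(r_K-1)\log n = n(r-1)\log n$) yields $f_G(X,\mu)=h$, while a restriction argument shows that any finite-entropy $G$-generator $\alpha$ produces a $K$-generator of a single $K$-ergodic component with entropy at most $n\,\sH(\alpha)$; comparing with the Bernoulli entropy gives $\Delta^*_G(X,\mu)\ge h+(r-1)\log n$. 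Sending $n\to\infty$ finishes the proof. The essential point you were missing is that induction from a large-index subgroup, unlike taking direct products of Bernoulli shifts, decouples $f_G$ from $\Delta^*_G$.
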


Since f-invariant entropy is currently only defined when there is a generating partition having finite Shannon entropy, it follows from our main theorem that $\Delta^*_G(X, \mu)$ and $\Delta_G(X, \mu)$ are finite for any ergodic action $G \acts (X, \mu)$ in which $f_G(X, \mu)$ is defined (so in particular for the actions considered in the proposition above).

We wish to emphasize that this proposition only says that the \emph{most obvious} translations of Krieger's theorem and Rohlin's theorem to the setting of f-invariant entropy are false. There is still opportunity for subtle modifications of Krieger's theorem and Rohlin's theorem to be true for f-invariant entropy (we will mention such a possible modification).

We obtain a similar but much weaker result for sofic entropy. With sofic entropy the situation is much different though because the sofic entropy of a sofic group action $G \acts (X, \mu)$ is not a single number but a collection of numbers. These numbers are indexed by sofic approximation sequences to the group. It is known that different sofic approximation sequences can give different sofic entropy values, however it is not yet understood how widespread this phenomena is. For a sofic group $G$, a sofic approximation sequence $\Sigma$ to $G$, and an action $G \acts (X, \mu)$, we denote the corresponding sofic entropy by $h^\Sigma_G(X, \mu)$. We remark that $h^\Sigma_G(X, \mu)$ is always defined and takes values in $\{-\infty\} \cup [0, +\infty]$.

\begin{prop} \label{INTRO PROPS}
There exists a sofic group $G$, a sofic approximation sequence $\Sigma$ to $G$, and an essentially free ergodic action $G \acts (X, \mu)$ such that $h^\Sigma_G(X, \mu) = -\infty$ but $\Delta^*_G(X, \mu) = \Delta_G(X, \mu) = +\infty$.
\end{prop}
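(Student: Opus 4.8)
The plan is to exhibit a sofic group $G$ together with a sofic approximation sequence $\Sigma$ and an action whose $\Sigma$-sofic entropy is $-\infty$, yet for which no finite (indeed no finite-entropy) generator exists. The source of such examples is the interaction between sofic entropy and the Bernoulli shift over an infinite index subgroup. Concretely, I would take $G = \Z \ast H$ for a suitable countable sofic group $H$ (or more simply a group containing $\Z$ as a subgroup of infinite index in a sofic way), and consider the co-induced action $G \acts (X,\mu)$ obtained by co-inducing the Bernoulli shift $\Z \acts ([0,1]^\Z, \lambda^\Z)$ from $\Z$ up to $G$. The key facts I would invoke are: (i) co-induction from an infinite-index subgroup, when combined with an appropriately chosen sofic approximation sequence $\Sigma$ (one which ``sees'' the infinite index), forces $h^\Sigma_G(X,\mu) = -\infty$ — this is a known vanishing phenomenon for sofic entropy, analogous to the fact that over non-amenable groups a Bernoulli shift with a non-free behavior or a ``stretched'' action can have $-\infty$ sofic entropy relative to a bad sofic approximation; and (ii) the co-induced action is essentially free and ergodic (ergodicity because the base action is ergodic and the subgroup is infinite index, freeness because the base action is free).

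The second half — showing $\Delta^*_G(X,\mu) = \Delta_G(X,\mu) = +\infty$ — is where I would spend most of the effort, and it is the main obstacle. The point is that a generator for the co-induced action $G \acts X$ would, by restriction, yield a lot of information about the base Bernoulli shift $\Z \acts [0,1]^\Z$; but more importantly, the co-induced space is an uncountable product indexed by $G/\Z$, so any $G$-generating partition $\alpha$ must have the property that $\Z \cdot (\text{something derived from }\alpha)$ generates $[0,1]^\Z$ as a $\Z$-space. Since the Bernoulli base $\Z \acts [0,1]^\Z$ has infinite Kolmogorov--Sinai entropy, it has no generating partition of finite Shannon entropy; by Rohlin's theorem (cited in the introduction), $\Delta^*_\Z([0,1]^\Z,\lambda^\Z) = +\infty$, and a fortiori no finite generator exists there. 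I would then argue that a finite-Shannon-entropy $G$-generator for $X$ would push down to a finite-Shannon-entropy $\Z$-generator for the base factor, contradicting this. Making the ``push down'' rigorous is the technical crux: one must check that the map from $X$ to the base copy $[0,1]^\Z$ (projection onto the coset of the identity) is a factor map of $\Z$-actions, and that a $G$-generator restricts to a $\Z$-generator of this factor while not increasing Shannon entropy.

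More carefully, I would first verify that if $\alpha$ is any generating partition for $G \acts (X,\mu)$, then the countable partition $\bigvee_{g \in G \setminus \Z, |g| \le n} g\cdot\alpha$ refined appropriately, when pushed to the $\Z$-factor $([0,1]^\Z,\lambda^\Z)$, is generating for the $\Z$-action; this uses only that factor maps send generators to generators and that $\Z \le G$. The Shannon entropy estimate $\sH(\bar\alpha) \le \sH(\alpha)$ for the pushforward $\bar\alpha$ of $\alpha$ along a factor map is standard (pushing forward a partition can only decrease, never increase, Shannon entropy). Combining these gives a generator for $\Z \acts [0,1]^\Z$ of Shannon entropy at most $\sH(\alpha)$; since Rohlin's theorem forces every such generator to have infinite Shannon entropy, we conclude $\sH(\alpha) = +\infty$, hence $\Delta^*_G(X,\mu) = +\infty$ and therefore also $\Delta_G(X,\mu) = +\infty$ since a finite partition has finite Shannon entropy. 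The only remaining loose end is the explicit construction of $G$ and $\Sigma$ making $h^\Sigma_G(X,\mu) = -\infty$: here I expect to cite the known computation of sofic entropy for co-induced actions (the sofic entropy of a co-induction from an infinite-index subgroup, relative to a sofic approximation sequence adapted to that subgroup, is $-\infty$ unless the base entropy is $0$), which already appears in Bowen's and Kerr--Li's work and in work on co-induction and sofic entropy. If no off-the-shelf citation suffices, the fallback is a direct counting argument: for the chosen $\Sigma$, the number of good ``microstates'' for $\alpha$ at scale $n$ grows subexponentially in the size of the sofic approximants because each microstate is forced to replicate an infinite-entropy Bernoulli pattern on a vanishing fraction of coordinates, yielding $h^\Sigma_G(X,\mu) = -\infty$.
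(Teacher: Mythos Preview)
Your proposal has a fatal gap at the sofic-entropy step. The co-induction of the Bernoulli shift $\Z \acts ([0,1]^\Z,\lambda^\Z)$ to any countable supergroup $G \geq \Z$ is canonically $G$-isomorphic to the full Bernoulli shift $G \acts ([0,1]^G,\lambda^G)$: the map sending an $\Z$-equivariant function $f : G \to [0,1]^\Z$ to $g \mapsto f(g)(0)$ is a measure-preserving $G$-equivariant bijection. But a Bernoulli shift over a sofic group has sofic entropy equal to the Shannon entropy of its base measure for \emph{every} sofic approximation sequence (Bowen \cite{B10b} for finite-entropy base; Kerr--Li \cite{KL11b} in general). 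Hence for your action $h^\Sigma_G(X,\mu) = +\infty$, not $-\infty$, regardless of which $\Sigma$ you choose. There is no ``vanishing phenomenon'' for co-induced Bernoulli shifts, and your fallback microstate count cannot succeed either: i.i.d.\ colorings of the vertices of any sofic approximant already furnish exponentially many good models for a Bernoulli action. So your example does satisfy $\Delta^*_G(X,\mu)=+\infty$ (indeed directly by Kerr--Li), but it can never satisfy $h^\Sigma_G(X,\mu)=-\infty$.

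The paper proceeds almost oppositely. It uses \emph{induction} (not co-induction) from a \emph{finite}-index normal subgroup $K \lhd G$, with $G$ a non-cyclic free group: one takes $X=(G/K)\times [0,1]^K$ with the action induced from $K \acts ([0,1]^K,\lambda^K)$. A preceding lemma, using Bowen's realization of f-invariant entropy via random sofic approximations \cite{B10c} together with $f_G(G/K,\zeta)<0$, produces a specific $\Sigma$ with $h^\Sigma_G(G/K,\zeta)=-\infty$. Since $(X,\mu)$ factors onto $(G/K,\zeta)$ and the value $-\infty$ \emph{is} inherited by extensions (no microstate for the factor means no microstate for the extension, via Kerr's partition formulation \cite{Ke}), one obtains $h^\Sigma_G(X,\mu)=-\infty$. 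For $\Delta^*_G=+\infty$ the paper does use a push-down of the kind you sketch, but to the finite-index subgroup $K$: a finite-entropy $G$-generator $\alpha$ yields the $K$-generator $\bigvee_{i=1}^{n} g_i^{-1}\cdot\alpha$ on a $K$-ergodic component isomorphic to $([0,1]^K,\lambda^K)$, with Shannon entropy at most $n\cdot \sH(\alpha)$, contradicting Kerr--Li \cite{KL11b}. That bound works precisely because $|G:K|=n<\infty$; in your infinite-index setting the corresponding join is infinite and the entropy estimate becomes vacuous, so your push-down argument would also need repair.
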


We do not view the above corollary as sufficient grounds to say that Krieger's theorem and Rohlin's theorem fail for sofic entropy. The fact is that having $h^\Sigma_G(X, \mu) = -\infty$ reflects almost nothing about the action; it only means that $\Sigma$ is inadequate for modeling the action $G \acts (X, \mu)$. We do not know if Rohlin's theorem and Krieger's theorem hold for actions whose sofic entropy is not negative infinity.

\subsection*{Organization}
In Section \ref{SEC PROOF} below, we prove Theorem \ref{INTRO THM}. Then in Section \ref{SEC COUNTER} we prove Propositions \ref{INTRO PROPF} and \ref{INTRO PROPS}. These two sections are written independently of one another.

\subsection*{Acknowledgments}
This material is based upon work supported by the National Science Foundation Graduate Student Research Fellowship under Grant No. DGE 0718128. The author would like to thank his advisor, Ralf Spatzier, for helpful conversations. The author would also like to thank Benjamin Weiss for some references and for information on Rohlin's theorem.

\section{Construction of finite generators} \label{SEC PROOF}

We begin with an equivalent characterization of generating partitions. For a partition $\alpha$ of $X$ and a point $x \in X$, we write $\alpha(x)$ to denote the unique member $A \in \alpha$ with $x \in A$.

\begin{lem} \label{LEM GENEQ}
Let $X$ be a standard Borel space, let $\mu$ be a Borel probability measure on $X$, and let $G$ be a countable group acting by measure preserving bijections on $X$. The following are equivalent for a countable measurable partition $\alpha$ of $X$
\begin{enumerate}
\item[\rm (i)] $\alpha$ is a generating partition;
\item[\rm (ii)] for every Borel set $B \subseteq X$ there is a set $B' \in G \cdot \alpha$ with $\mu(B \symd B') = 0$;
\item[\rm (iii)] there is $X' \subseteq X$ such that $\mu(X') = 1$ and for all $x \neq y \in X'$ there is $g \in G$ with $\alpha(g \cdot x) \neq \alpha(g \cdot y)$.
\end{enumerate}
\end{lem}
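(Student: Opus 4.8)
The plan is to establish the cycle (i) $\Rightarrow$ (ii) $\Rightarrow$ (iii) $\Rightarrow$ (i), though the equivalence of (i) and (ii) is essentially the observation that the definition of ``generating'' given in the introduction is insensitive to whether one quantifies over all measurable sets or all Borel sets, once one recalls that $(X,\mu)$ is (the measure-theoretic completion of) a standard Borel space; so the real content is the equivalence of the ``separating'' condition (iii) with (i)/(ii).

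For (ii) $\Rightarrow$ (iii), I would argue as follows. The $\sigma$-algebra $G \cdot \alpha$ is countably generated: it is generated by the countable family $\{g \cdot A : g \in G,\ A \in \alpha\}$ (using that $\alpha$ is countable and $G$ is countable). Enumerate this family as $\{C_n : n \in \N\}$ and let $\Pre \colon X \to 2^{\N}$ be the map $x \mapsto \{n : x \in C_n\}$; this is a Borel map, and two points $x, y$ have $\Pre(x) = \Pre(y)$ precisely when they are not separated by any $C_n$, equivalently when $\alpha(g\cdot x) = \alpha(g \cdot y)$ for every $g \in G$. Now (ii) says that every Borel set agrees $\mu$-a.e.\ with a set in $G\cdot\alpha$; applying this to a countable separating family of Borel sets on the standard Borel space $X$ (which exists since $X$ is standard Borel) and intersecting the resulting full-measure sets, I obtain a Borel set $X' \subseteq X$ with $\mu(X') = 1$ on which the atoms of $G \cdot \alpha$ are singletons — which is exactly condition (iii). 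One should be slightly careful that ``agrees a.e.'' produces, after removing a null set, honest containment/exclusion so that the atoms genuinely separate points of $X'$; this is where the bulk of the bookkeeping lies.

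For (iii) $\Rightarrow$ (ii), the idea is that if $G \cdot \alpha$ separates points on a conull Borel set $X'$, then by the standard Borel isomorphism theory (a countably generated $\sigma$-algebra that separates points on a standard Borel space is the full Borel $\sigma$-algebra of that space, modulo the usual caveats), the restriction of $G\cdot\alpha$ to $X'$ is all of the Borel $\sigma$-algebra of $X'$; hence every Borel subset of $X'$ — and so, up to the null set $X \setminus X'$, every Borel subset of $X$ — lies in $G \cdot \alpha$. Finally (ii) $\Rightarrow$ (i) is immediate since Borel sets are dense in measurable sets modulo $\mu$-null sets, and (i) $\Rightarrow$ (ii) is trivial. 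The main obstacle is purely measure-theoretic hygiene: invoking the right form of the ``countably generated separating $\sigma$-algebra $=$ Borel $\sigma$-algebra'' statement (e.g.\ via the Borel isomorphism theorem applied to the map $\Pre$ above, whose image is an analytic, hence measurable, subset of $2^{\N}$) and making sure the exceptional null sets can be absorbed into $X \setminus X'$ uniformly; no dynamics beyond the countability of $G$ is actually used.
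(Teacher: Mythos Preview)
Your proposal is correct and follows essentially the same route as the paper: for (ii)\,$\Rightarrow$\,(iii) the paper also applies (ii) to a countable separating family $(B_n)$ of Borel sets and removes the null set $\bigcup_n(B_n\symd C_n)$, and for (iii)\,$\Rightarrow$\,(ii) the paper builds exactly your map $\Pre$ (there called $\phi\colon X'\to\beta^G$) and argues that it pulls back the Borel $\sigma$-algebra to $G\cdot\alpha\res X'$. One sharpening: the phrase ``analytic, hence measurable'' is not quite the right invocation---what you actually need (and what the paper cites as \cite[Corollary~15.2]{K95}) is the Luzin--Souslin theorem that an \emph{injective} Borel map has Borel image, so that $\Pre\res X'$ is a Borel isomorphism onto a Borel subset of $2^\N$; mere analyticity would not suffice to put $\Pre^{-1}(\Pre(B))$ into the countably generated $\sigma$-algebra $G\cdot\alpha$.
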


\begin{proof}
The equivalence of (i) and (ii) is by definition.

(ii) $\Rightarrow$ (iii). Since $X$ is a standard Borel space, there is a countable collection $(B_n)_{n \in \N}$ of Borel subsets of $X$ such that the $\sigma$-algebra generated by $\{B_n \: n \in \N\}$ is precisely the collection of all Borel subsets of $X$ and such that $(B_n)_{n \in \N}$ separates points, meaning that for $x \neq y \in X$ there is $n \in \N$ with either $x \in B_n$ and $y \not\in B_n$ or $x \not\in B_n$ and $y \in B_n$ \cite[Proposition 12.1]{K95}. For each $n \in \N$ let $C_n \in G \cdot \alpha$ be such that $\mu(B_n \symd C_n) = 0$. Set
$$X' = X \setminus \bigcup_{n \in \N} (B_n \symd C_n).$$
Then $\mu(X') = 1$. Now fix $x, y \in X'$ with $x \neq y$. Then there is $n \in \N$ with $B_n$ containing either $x$ or $y$ but not containing both. Since $x, y \in X'$ we have $x \in B_n$ if and only if $x \in C_n$, and similarly $y \in B_n$ if and only if $y \in C_n$. Therefore $C_n$ contains either $x$ or $y$, but it does not contain both. Since $C_n$ lies in the $\sigma$-algebra generated by the sets $\{g \cdot A \:g \in G, \ A \in \alpha\}$, there must be $g \in G$ and $A \in \alpha$ with $g \cdot A$ containing either $x$ or $y$ but not both. Then $\alpha(g^{-1} \cdot x) \neq \alpha(g^{-1} \cdot y)$.

(iii) $\Rightarrow$ (ii). Let $X' \subseteq X$ be such that $\mu(X') = 1$ and for all $x \neq y \in X'$ there is $g \in G$ with $\alpha(g \cdot x) \neq \alpha(g \cdot y)$. By replacing $X'$ with $\bigcap_{g \in G} g \cdot X'$ if necessary, we may suppose that $g \cdot X' = X'$ for every $g \in G$. Let $\beta$ be the partition of $X'$ induced by $\alpha$. Since $\mu(X \setminus X') = 0$, it suffices to show that for every Borel set $B \subseteq X$ we have $B \cap X' \in G \cdot \beta$. Consider the space $\beta^G$ of all functions from $G$ to $\beta$. We give $\beta^G$ the topology of point-wise convergence (under the discrete topology on $\beta$). Then $\beta^G$ is a Polish space since $\beta$ is countable. We let $G$ act on $\beta^G$ by the rule $(h \cdot z)(g) = z(h^{-1} g)$ for $h, g \in G$ and $z \in \beta^G$. Define $\phi: X' \rightarrow \beta^G$ by $\phi(x)(g) = \beta(g^{-1} \cdot x)$. Notice that
$$\phi(h \cdot x)(g) = \beta(g^{-1} h \cdot x) = \phi(x)(h^{-1} g) = [h \cdot \phi(x)](g).$$
So $\phi(h \cdot x) = h \cdot \phi(x)$. If $x \neq y \in X'$ then by assumption there is $g \in G$ with $\beta(g \cdot x) \neq \beta(g \cdot y)$ and thus $\phi(x) \neq \phi(y)$. So $\phi$ is injective. The function $\phi$ is Borel since the inverse image of any open set in $\beta^G$ is Borel. Consider the partition $\xi = \{C_A \: A \in \beta\}$ of $\beta^G$, where $C_A = \{z \in \beta^G \: z(1_G) = A\}$. It is readily seen that the $G$-translates of the members of $\xi$ generate the open subsets of $\beta^G$. Therefore $G \cdot \xi$ is precisely the collection of Borel subsets of $\beta^G$. Notice that $A \subseteq \phi^{-1}(C_A)$ for $A \in \beta$ and thus $A = \phi^{-1}(C_A)$ for $A \in \beta$ (since $\phi^{-1}(\xi)$ and $\beta$ are both partitions of $X'$). Consider the collection $\mathcal{C}$ of subsets $C \subseteq \beta^G$ such that $\phi^{-1}(C) \in G \cdot \beta$. Clearly $\mathcal{C}$ is a $\sigma$-algebra and $g \cdot C_A \in \mathcal{C}$ for every $g \in G$ and $C_A \in \xi$. Thus $\phi^{-1}(C) \in G \cdot \beta$ for every Borel set $C \subseteq \beta^G$. Now consider a Borel set $B \subseteq X$. Since $\phi$ is injective and Borel, $\phi(B \cap X')$ is a Borel subset of $\beta^G$ \cite[Corollary 15.2]{K95}. Therefore $B \cap X' = \phi^{-1}(\phi(B \cap X')) \in G \cdot \beta$. So there is $B' \in G \cdot \alpha$ with $B' \cap X' = B \cap X'$ and thus $\mu(B \symd B') = 0$ since $\mu(X \setminus X') = 0$.
\end{proof}

For a finite set $S$ we let $S^{< \omega}$ denote the set of all finite words with letters in $S$ (the $\omega$ in the superscript denotes the first infinite ordinal). For $z \in S^{< \omega}$ we let $|z|$ denote the length of the word $z$. In the first step of the proof of \cite[Theorem 2.1]{Kr70}, Krieger proves the following. 

\begin{lem}[Krieger] \label{LEM KRIEGER}
Let $(X, \mu)$ be a probability space. If $\alpha$ is a countable measurable partition of $X$ with $\sH(\alpha) < \infty$ then there exists an injection $L : \alpha \rightarrow \{1, 2, 3\}^{<\omega}$ such that
$$\sum_{A \in \alpha} |L(A)| \cdot \mu(A) < \infty.$$
\end{lem}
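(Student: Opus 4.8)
The plan is to treat this as a Shannon-style source coding estimate. First I would set aside the (at most countably many) atoms of measure zero, to be handled at the very end, and enumerate the positive-measure atoms as $A_1, A_2, \dots$ so that $\mu(A_1) \ge \mu(A_2) \ge \cdots$. The basic inequality driving everything is that, since $\mu(A_1) \ge \cdots \ge \mu(A_i)$ and $\sum_{j \le i} \mu(A_j) \le 1$, we have $i \cdot \mu(A_i) \le 1$, that is $\mu(A_i) \le 1/i$ and hence $\log_3 i \le \log_3(1/\mu(A_i))$.

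Next I would define $L$ by assigning the words of $\{1,2,3\}^{<\omega}$ to $A_1, A_2, \dots$ greedily in order of non-decreasing length. Since there are exactly $3^k$ words of length $k$, the number of words of length strictly less than $k$ is $\sum_{j=1}^{k-1} 3^j = (3^k - 3)/2$; consequently if $A_i$ is assigned a word of length $k \ge 2$ then $i > (3^{k-1}-3)/2 \ge 3^{k-2}-1$, so $i \ge 3^{k-2}$ and therefore $|L(A_i)| = k \le \log_3 i + 2$. This last bound is trivially true when $|L(A_i)| = 1$ as well, and $L$ is injective by construction.

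Combining the two estimates gives
$$\sum_{i} |L(A_i)| \cdot \mu(A_i) \le \sum_i \big(\log_3(1/\mu(A_i)) + 2\big)\mu(A_i) = \tfrac{1}{\ln 3}\sum_i \big(-\mu(A_i)\ln\mu(A_i)\big) + 2\sum_i \mu(A_i) \le \tfrac{\sH(\alpha)}{\ln 3} + 2 < \infty,$$
using $\sH(\alpha) < \infty$ and $\sum_i \mu(A_i) \le 1$. Finally, since $\{1,2,3\}^{<\omega}$ is countably infinite and only countably many words have been used so far, I would extend $L$ to an injection on all of $\alpha$ by assigning the remaining (measure-zero) atoms distinct unused words; these contribute $0$ to the sum, so the bound is unaffected.

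I do not expect a serious obstacle here — this is essentially the classical source coding theorem, and one could equally well invoke the Kraft inequality with codeword lengths $\ell_i = \lceil \log_3(1/\mu(A_i)) \rceil$ (note $\sum_i 3^{-\ell_i} \le \sum_i \mu(A_i) \le 1$). The only points requiring care are the elementary bound $\mu(A_i) \le 1/i$ coming from sorting the atoms by measure, which is what links the combinatorial length estimate to the Shannon entropy, and the bookkeeping needed to extend the code across the null atoms while keeping $L$ injective into the three-letter alphabet.
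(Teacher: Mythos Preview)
Your argument is correct. Both you and the paper sort the atoms by decreasing measure, but the constructions of $L$ diverge from there. The paper follows a Kraft-inequality route: it sets $t(m) = \lceil -\log \mu(A_m) \rceil$ (roughly), verifies $\sum_m 3^{-t(m)} \le e$, and uses this to show that for all $m$ beyond some $m_0$ one can injectively choose $L(A_m)$ of length exactly $t(m)$; the finitely many remaining atoms are then patched in. Your greedy enumeration of all words of $\{1,2,3\}^{<\omega}$ in order of length, combined with the elementary bound $\mu(A_i)\le 1/i$, sidesteps the Kraft bookkeeping entirely and is arguably cleaner for the purpose at hand. The cost is a slightly worse constant ($\sH(\alpha)/\ln 3 + 2$ in place of essentially $\sH(\alpha)$), but that is irrelevant for the lemma as stated. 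Your alternative sketch via Kraft with $\ell_i=\lceil \log_3(1/\mu(A_i))\rceil$ is in fact closer in spirit to what the paper does.

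One small remark: after correctly computing that there are $(3^k-3)/2$ words of length strictly less than $k$, you write $i>(3^{k-1}-3)/2$ rather than the sharper $i>(3^k-3)/2$; this is harmless since the weaker inequality still yields $|L(A_i)|\le \log_3 i + 2$, but you may as well keep the tighter bound.
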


As a convenience to the reader, we include the proof below.

\begin{proof}
This is clear if $\alpha$ is finite. So suppose that $\alpha$ is countably infinite and enumerate $\alpha$ as $\alpha = \{A_1, A_2, \ldots\}$, where $\mu(A_{m+1}) \leq \mu(A_m)$ for all $m$. For $m \geq 1$ choose $t(m) \in \N$ so that $-\log(\mu(A_m)) - 1 < t(m) \leq -\log(\mu(A_m))$. Then
$$3^{-t(m)} \leq e^{-t(m)} \leq e^{\log(\mu(A_m)) + 1} = e \cdot \mu(A_m).$$
Thus $\sum_{m = 1}^\infty 3^{-t(m)} \leq e$. Notice that the sequence $(t(m))_{m = 1}^\infty$ is non-decreasing. Set $N_1 = 1$ and for $m > 1$ define
$$N_m = \min\{k > N_{m-1} \: t(k) > t(N_{m-1})\}.$$
Then
$$\sum_{m = 1}^\infty (N_{m+1} - N_m) \cdot 3^{-t(N_m)} = \sum_{m=1}^\infty 3^{-t(m)} \leq e.$$
So there is $m_0 \geq 1$ such that $N_{m+1} - N_m < 3^{t(N_m)}$ for all $m \geq m_0$. Therefore it is possible to choose $L(A_m) \in \{1, 2, 3\}^{t(m)}$ for every $m \geq m_0$ so that $L : \{A_{m_0}, A_{m_0+1}, \ldots\} \rightarrow \{1, 2, 3\}^{< \omega}$ is injective. Moreover, since the inequality $N_{m+1} - N_m < 3^{t(N_m)}$ (which holds for $m \geq m_0$) is strict, $L$ can be extended to an injective function $L : \alpha \rightarrow \{1, 2, 3\}^{< \omega}$. Finally, it suffices to show that
$$\sum_{m = m_0}^\infty |L(A_m)| \cdot \mu(A) < \infty.$$
This follows from the fact that $|L(A_m)| = t(m) \leq - \log(\mu(A_m))$ for $m \geq m_0$ and $\sum_{m = m_0}^\infty -\log(\mu(A_m)) \cdot \mu(A_m) \leq \sH(\alpha) < \infty$. 
\end{proof}

The function $L$ above can be extended to $X$ by setting $L(x) = L(\alpha(x))$. The above lemma then says that the labeling $L : X \rightarrow \{1, 2, 3\}^{< \omega}$ has finite length on average. The idea behind the proof of Theorem \ref{INTRO THM} is to rearrange the L ``data'' within each orbit to obtain a new function (a relabeling) $R: X \rightarrow \{1, 2, 3, 4\}^{< \omega}$ which has uniformly bounded length. The function $R$ would then have finite image and thus induce a finite partition of $X$. In order for this partition to be generating, one must ensure that the function $L$ can be recovered from $R$. In \cite{Kr70}, Krieger carried out this argument in the case of $\Z$ actions, obtaining a weak form of his finite generator theorem which did not specify the smallest possible size of a finite generator. While our proof is inspired by his argument, our proof is quite distinct as Krieger's argument relied heavily on properties of $\Z$.

The following lemma is essential for the task of rearranging the $L$ data within each orbit.

If $G$ acts on $(X, \mu)$ and $A \subseteq X$, then we say that $x, y \in X$ are \emph{$A$-symmetric} if for every $g \in G$ $g \cdot x \in A \Leftrightarrow g \cdot y \in A$.

\begin{lem} \label{LEM BIJECT}
Let $G$ be a countable group acting ergodically by measure preserving bijections on a probability space $(X, \mu)$. For every pair of measurable sets $A, B \subseteq X$ there exist measurable sets $P_1(A, B), P_2(A, B) \subseteq X$ and a measurable bijection $\psi(A, B) : P_1(A, B) \rightarrow P_2(A, B)$ satisfying the following:
\begin{enumerate}
\item[\rm (i)] $P_1(A, B) \subseteq A$ and $P_2(A, B) \subseteq B$;
\item[\rm (ii)] either $\mu(A \setminus P_1(A, B)) = 0$ or $\mu(B \setminus P_2(A, B)) = 0$;
\item[\rm (iii)] if $x, y \in X$ are both $A$-symmetric and $B$-symmetric, then $x$ and $y$ are both $P_1(A, B)$-symmetric and $P_2(A, B)$-symmetric;
\item[\rm (iv)] $\psi(A, B)(x) \in G \cdot x$ for every $x \in P_1(A, B)$;
\item[\rm (v)] if $x, y \in P_1(A, B)$ are both $A$-symmetric and $B$-symmetric, then there is $h \in G$ with $\psi(A, B)(x) = h \cdot x$ and $\psi(A, B)(y) = h \cdot y$.
\end{enumerate}
\end{lem}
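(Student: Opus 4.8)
The plan is to build $\psi(A,B)$ by an exhaustion argument, using ergodicity to ``push'' mass between $A$ and $B$ one group element at a time. First I would enumerate $G=\{g_0=1_G,g_1,g_2,\dots\}$ and construct $P_1$, $P_2$, and $\psi$ as increasing unions $P_1=\bigcup_n P_1^{(n)}$, $P_2=\bigcup_n P_2^{(n)}$ of pieces that are ``already matched''. At stage $n$ we look at the set of points in $A$ not yet matched, intersect it with the portion that can be moved into the not-yet-matched part of $B$ via $g_n$, take the appropriate subset so that the map $x\mapsto g_n\cdot x$ is a bijection onto a subset of the remaining part of $B$, and adjoin it. Concretely, set $Q_1^{(n)} = (A\setminus P_1^{(n-1)}) \cap g_n^{-1}\cdot(B\setminus P_2^{(n-1)})$, let $P_1^{(n)}=P_1^{(n-1)}\cup Q_1^{(n)}$, $P_2^{(n)}=P_2^{(n-1)}\cup g_n\cdot Q_1^{(n)}$, and extend $\psi$ on $Q_1^{(n)}$ by $x\mapsto g_n\cdot x$. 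Properties (i) and (iv) are immediate from the construction, and (ii) is where ergodicity enters: if at the end $\mu(A\setminus P_1)>0$ and $\mu(B\setminus P_2)>0$, then the $G$-invariant sets generated by the leftover parts would be disjoint (no $g_n$ can move leftover-$A$ mass into leftover-$B$), contradicting that the union of their $G$-saturations must be almost everything; more carefully, $\mu(G\cdot(A\setminus P_1))$ and $\mu(G\cdot(B\setminus P_2))$ would both be positive $G$-invariant, hence each equal to $1$, yet by construction $g_n\cdot(A\setminus P_1)$ is disjoint from $B\setminus P_2$ for every $n$, forcing $G\cdot(A\setminus P_1)$ disjoint from $B\setminus P_2$, a contradiction once one also notes the leftover set itself has positive measure.

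Next I would verify the symmetry conditions (iii) and (v), and this is the part requiring care. The point is that each set appearing in the recursion — $P_1^{(n-1)}$, $P_2^{(n-1)}$, $Q_1^{(n)}$, $g_n\cdot Q_1^{(n)}$ — is obtained from $A$ and $B$ by finitely many applications of the operations: translate by a group element, intersect, take relative complement. If $x$ and $y$ are simultaneously $A$-symmetric and $B$-symmetric, then $x$ and $y$ are $C$-symmetric for any set $C$ built from $A$ and $B$ by these operations; this is a straightforward induction on the construction of $C$, since $A$-symmetry of $x,y$ means $g\cdot x$ and $g\cdot y$ have the same ``itinerary'' relative to $A$, and one checks translation, intersection, and complement all preserve having matching itineraries. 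Applying this to $C=P_1^{(n)}$ and $C=P_2^{(n)}$ for all $n$, and taking unions, gives (iii). For (v), suppose $x,y\in P_1$ are both $A$- and $B$-symmetric; then $x\in Q_1^{(n)}$ for some (least) $n$, and I claim $y\in Q_1^{(n)}$ as well — indeed $x\in A\setminus P_1^{(n-1)}$ and $g_n\cdot x\in B\setminus P_2^{(n-1)}$, so by the symmetry just established $y\in A\setminus P_1^{(n-1)}$ and $g_n\cdot y\in B\setminus P_2^{(n-1)}$, hence $y\in Q_1^{(n)}$. On $Q_1^{(n)}$ the map $\psi$ is precisely $z\mapsto g_n\cdot z$, so $h=g_n$ works for both $x$ and $y$, giving (v).

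The main obstacle is getting the measurability and bijectivity bookkeeping right while keeping the symmetry argument clean. The map $x\mapsto g_n\cdot x$ on $Q_1^{(n)}$ is automatically injective (it is a restriction of the action of a group element), and the images at distinct stages are disjoint because we always remove $P_2^{(n-1)}$ before adjoining, so the union map $\psi$ is a well-defined measurable bijection $P_1\to P_2$ — but one should double-check that the images $g_n\cdot Q_1^{(n)}$ land in $B$ and are pairwise disjoint, which follows since $Q_1^{(n)}\subseteq A\setminus P_1^{(n-1)}$ forces $g_n\cdot Q_1^{(n)}\subseteq B\setminus P_2^{(n-1)}$ and the $P_2^{(n)}$ are increasing. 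Everything in sight is a Borel combination of $A$, $B$, and translates thereof, so measurability is never in question. Thus the substantive content is exactly the ergodicity step in (ii) and the itinerary-induction underlying (iii) and (v); the rest is routine.
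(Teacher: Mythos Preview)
Your proposal is correct and follows essentially the same exhaustion strategy as the paper: the paper fixes a well-ordering $\preceq$ of $G$ and at stage $n$ uses the $\preceq$-least $h$ with $(h\cdot(A\setminus P_1^{n-1}))\cap(B\setminus P_2^{n-1})\neq\varnothing$, whereas you fix an enumeration and simply use $g_n$ at stage $n$, but this is a cosmetic difference and the verifications of (i)--(v) proceed identically. Your ergodicity argument for (ii) and your ``itinerary-induction'' for (iii) and (v) are exactly what the paper does.
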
 

\begin{proof}
We first define auxiliary functions $Q_1$, $Q_2$, and $\theta$ which will play roles similar to $P_1$, $P_2$, and $\psi$, respectively. The idea is to define $Q_1$, $Q_2$, and $\theta$ to achieve clauses (i), (iii), (iv), and (v) and then use these functions repeatedly to perform a type of exhaustion argument and achieve clause (ii).

Fix any well ordering, $\preceq$, of $G$. If $A, B \subseteq X$ are measurable and $(G \cdot A) \cap B = \varnothing$ then we set $Q_1(A, B) = Q_2(A, B) = \theta(A, B) = \varnothing$. If $(G \cdot A) \cap B \neq \varnothing$ then we let $h \in G$ be the $\preceq$-least element of $G$ satisfying $(h \cdot A) \cap B \neq \varnothing$. In this case we set
$$Q_1(A, B) = A \cap h^{-1} \cdot B$$
$$Q_2(A, B) = (h \cdot A) \cap B$$
$$\theta(A, B)(x) = h \cdot x \ (\text{for } x \in Q_1(A, B)).$$
Notice that $Q_1(A, B)$ and $Q_2(A, B)$ are measurable subsets of $X$ and $\theta(A, B)$ is a measurable function. If $P_1$, $P_2$, and $\psi$ are replaced with $Q_1$, $Q_2$, and $\theta$, respectively, then clauses (i), (iv), and (v) are clearly satisfied (clause (v) immediately follows from the simple definition of $\theta(A, B)$). Clause (iii) is also satisfied, for if we assume $Q_1(A, B)$ and $Q_2(A, B)$ are non-empty (clause (iii) is trivial otherwise) and let $h$ be as above, then for any $x \in X$ and $g \in G$ we have $g \cdot x \in Q_1(A, B) \Leftrightarrow (g \cdot x \in A \wedge h g \cdot x \in B)$ and $g \cdot x \in Q_2(A, B) \Leftrightarrow (g \cdot x \in B \wedge h^{-1} g \cdot x \in A)$.

We have $Q_1(A, B) \subseteq A$ and $Q_2(A, B) \subseteq B$. By repeatedly using the functions $Q_1$ and $Q_2$ we seek to exhaust (in measure) either $A$ or $B$. We recursively define
$$P_i^1(A, B) = Q_i(A, B),$$
$$P_i^n(A, B) = Q_i(A \setminus P_1^{n-1}(A, B), B \setminus P_2^{n-1}(A, B)) \cup P_i^{n-1}(A, B),$$
$$\psi^n(A, B) = \theta(A \setminus P_1^{n-1}(A, B), B \setminus P_2^{n-1}(A, B)).$$
We set $P_i(A, B) = \bigcup_{n \geq 1} P_i^n(A, B)$ and $\psi(A, B) = \bigcup_{n \geq 1} \psi^n(A, B)$. Clearly $P_i(A, B)$ is a measurable subset of $X$ and $\psi(A, B)$ is a measurable function. We remark that $\psi(A, B)$ is a well defined function since the domains of the $\psi^n$'s are pairwise disjoint.

Clauses (i) and (iv) are clearly satisfied. We now check clauses (ii), (iii), and (v).

(ii). Let $h_1$ be the $\preceq$-least element of $G$ with $(h_1 \cdot A) \cap B \neq \varnothing$ and for $n > 1$ let $h_n$ be the $\preceq$-least element of $G$ with
$$(h_n \cdot (A \setminus P_1^{n-1}(A, B))) \cap (B \setminus P_2^{n-1}(A, B)) \neq \varnothing.$$
If for some $n$ no such $h_n$ exists then by ergodicity either $\mu(A \setminus P_1^{n-1}(A, B)) = 0$ or $\mu(B \setminus P_2^{n-1}(A, B)) = 0$ and thus clause (ii) is satisfied since $P_i^{n-1}(A, B) \subseteq P_i(A, B)$. So we may suppose the $h_n$'s are defined. We must have $h_n \prec h_{n+1}$ since $P_i^{n-1}(A, B) \subseteq P_i^n(A, B)$. So if $g \in G$ is fixed then
$$(g \cdot (A \setminus P_1^{n-1}(A, B))) \cap (B \setminus P_2^{n-1}(A, B)) = \varnothing$$
for all but finitely many $n \geq 1$. Thus
$$(g \cdot (A \setminus P_1(A, B))) \cap (B \setminus P_2(A, B)) = \varnothing$$
for every $g \in G$. By ergodicity it follows that either $\mu(A \setminus P_1(A, B)) = 0$ or $\mu(B \setminus P_2(A, B)) = 0$.

(iii). Fix $x, y \in X$ which are both $A$-symmetric and $B$-symmetric. Then for $i = 1, 2$ we have that $x$ and $y$ are $P_i^1(A, B)$-symmetric, since $P_i^1(A, B) = Q_i(A, B)$. Now suppose that $x$ and $y$ are $P_i^{n-1}(A, B)$-symmetric for $i = 1, 2$. Then $x$ and $y$ are $(A \setminus P_1^{n-1}(A, B))$-symmetric and $(B \setminus P_2^{n-1}(A, B))$-symmetric. It follows from the definition of $P_i^n$ and the properties of $Q_i$ that $x$ and $y$ are $P_i^n(A, B)$ symmetric for $i = 1, 2$. By induction, this holds for all $n \geq 1$. Thus $x$ and $y$ are $P_i(A, B)$-symmetric for $i = 1, 2$.

(v). Fix $x, y \in P_1(A, B)$ which are both $A$-symmetric and $B$-symmetric. Let $n \geq 1$ be such that $x$ lies in the domain of $\psi^n(A, B)$. Notice that the domain of $\psi^n(A, B)$ is $P_1^n(A, B) \setminus P_1^{n-1}(A, B)$. The argument in the previous paragraph shows that $x$ and $y$ are $P_1^k(A, B)$-symmetric for every $k \geq 1$. Therefore $y$ lies in the domain of $\psi^n(A, B)$ as well. So
$$x, y \in P_1^n(A, B) \setminus P_1^{n-1}(A, B) = Q_1(A \setminus P_1^{n-1}(A, B), B \setminus P_2^{n-1}(A, B))$$
and $x$ and $y$ are both $(A \setminus P_1^{n-1}(A, B))$-symmetric and $(B \setminus P_2^{n-1}(A, B))$-symmetric, so from the properties of $Q_1$, $Q_2$, and $\theta$, it follows that there is $h \in G$ with
$$\psi(A, B)(x) = \psi^n(A, B)(x) = \theta(A \setminus P_1^{n-1}(A, B), B \setminus P_2^{n-1}(A, B))(x) = h \cdot x$$
and
$$\psi(A, B)(y) = \psi^n(A, B)(y) = \theta(A \setminus P_1^{n-1}(A, B), B \setminus P_2^{n-1}(A, B))(y) = h \cdot y.$$
This completes the proof.
\end{proof}

We are now ready to prove the main theorem.

\begin{thm}
Let $G$ be a countable group acting ergodically by measure preserving bijections on a standard probability space $(X, \mu)$. If this action admits a generating partition having finite Shannon entropy, then it admits a finite generating partition.
\end{thm}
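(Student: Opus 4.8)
The plan is to carry out the strategy sketched just before the statement: use Krieger's coding lemma to replace the given generating partition by a $\{1,2,3\}$-valued labelling of finite \emph{average} word-length, then redistribute the (long but measure-sparse) words within each orbit so that every point carries only a uniformly bounded amount of data, tagging the redistribution with the fourth symbol so that the original labelling can be recovered along orbits.

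Concretely, first fix a generating partition $\alpha$ with $\sH(\alpha) < \infty$ and apply Lemma \ref{LEM KRIEGER} to get an injection $L : \alpha \to \{1,2,3\}^{<\omega}$ with $\sum_{A \in \alpha} |L(A)|\,\mu(A) < \infty$. Pulling back, $L : X \to \{1,2,3\}^{<\omega}$, $L(x) = L(\alpha(x))$, has the same atoms as $\alpha$ (since $L$ is injective on $\alpha$) and satisfies $\int_X |L(x)|\,d\mu(x) < \infty$. By Lemma \ref{LEM GENEQ}(iii), since $\alpha$ generates, the map $x \mapsto (L(g \cdot x))_{g \in G}$ is injective on a conull set. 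Hence it suffices to produce a measurable $R : X \to \{1,2,3,4\}^{<\omega}$ with $\sup_x |R(x)| < \infty$, together with a way of recovering $(L(g \cdot x))_{g \in G}$ from $(R(g \cdot x))_{g \in G}$ on a conull set in a manner that commutes with the $G$-action: then $x \mapsto (R(g\cdot x))_{g}$ is injective on a conull set, so by Lemma \ref{LEM GENEQ} again the finite partition $\beta$ of $X$ by the value of $R$ is generating.

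The core is the construction of $R$, and this is where Lemma \ref{LEM BIJECT} enters. Since $\int |L|\,d\mu < \infty$, the sets $\{x : |L(x)| > n\}$ have summable measure, so overlong words are rare; fix an integer budget $m$ exceeding $\int |L|\,d\mu$ plus a fixed overhead. One builds $R$ by a limiting procedure starting from the delimited words $L(x)\,4$: at each stage take the set $A$ of points whose current word is too long and the set $B$ of points with spare room, feed $A,B$ to Lemma \ref{LEM BIJECT} to get the within-orbit partial bijection $\psi(A,B)$, move a trailing letter of each overlong word at $x$ to $\psi(A,B)(x)$, and record the move with a $4$-marker. Clause (ii) guarantees that at each stage either all the excess or all the room is exhausted, and the mass bound from $\int |L|\,d\mu < \infty$ forces it to be the excess, so in the limit $|R(x)|$ is bounded by a constant depending only on $m$; thus $R$ has finite range. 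The decoder is then defined orbit-locally: it reads the $4$-markers to recognise which parked letters belong to which earlier point and reassembles $L$. Clauses (iii) and (v) — preservation of $A$- and $B$-symmetry, and the fact that $\psi$ acts by a single group element on symmetric pairs — are exactly what make this decoding well defined and $G$-equivariant, since points with isomorphic $L$-labelled orbits are symmetric with respect to every set arising in the construction and are treated identically.

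The main obstacle is this invertibility of the redistribution in the absence of amenability: for amenable $G$ one would tile a.e.\ orbit by a F{\o}lner set and re-encode each tile, but for a general countable group there is no such tiling and no canonical way to enumerate or order an orbit, so all transporting and recovering of data must be done through the structural guarantees of Lemma \ref{LEM BIJECT}. Two delicate points sit inside this: (1) arranging the bookkeeping — the $4$-markers and the choice of which letters move where — so that it is a Borel function of the $L$-labelled orbit alone, which is what allows clauses (iii) and (v) to yield an equivariant decoder; and (2) proving the \emph{uniform} length bound on $R$, a measure/counting argument that combines the finiteness of $\int |L|\,d\mu$ with the exhaustion clause (ii) to ensure the redistribution never runs out of room before the excess is absorbed. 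Once these are in place, the remaining steps — measurability at each stage, finiteness of the range of $R$, and the final appeal to Lemma \ref{LEM GENEQ} — are routine.
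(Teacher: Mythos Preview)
Your proposal is correct and follows essentially the same approach as the paper: Krieger's coding lemma for $L$, iterated applications of Lemma~\ref{LEM BIJECT} to redistribute excess letters (with the mass bound ensuring clause (ii) always exhausts the overlong side rather than the available room), and the symmetry clauses (iii) and (v) to show the resulting finite-range $R$ generates. The paper's execution differs only in presentation---it indexes the redistribution by the letter position being moved (via the nested sets $B_n = \{x : |L(x)| \geq C+n\}$, defining $T_n$, $\theta_n$ recursively and then $R$ in one shot) and verifies the generating property by a direct two-case separation argument on whether $x,y$ are $B_n$-symmetric, rather than by building an explicit decoder.
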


\begin{proof}
Let $\alpha$ be a generating partition with $\sH(\alpha) < \infty$. By combining the classes of $\alpha$ having measure $0$ into a single class, we may suppose that $\alpha$ is countable. By Lemma \ref{LEM KRIEGER}, there is an injective function $L : \alpha \rightarrow \{1, 2, 3\}^{< \omega}$ satisfying
$$\sum_{A \in \alpha} |L(A)| \cdot \mu(A) < \infty.$$
For $x \in X$ define $L(x) = L(\alpha(x))$.

The function $L: X \rightarrow \{1, 2, 3\}^{< \omega}$ on average has finite length, so the idea now is to rearrange the $L$-data within each orbit so that in the end every point of $X$ has a word of uniformly finite length associated to it. In doing this, one must take care not to lose data, and more importantly one must rearrange the data in such a way that the original function $L$ can be decoded from the new data. The functions appearing in the previous lemma play the critical role of achieving these requirements.

First we need to determine how long the new words we create should be. Since $|L(x)|$ is integer valued, we have
$$\sum_{n \geq 1} n \cdot \mu(\{x \in X \: |L(x)| = n\}) = \sum_{A \in \alpha} |L(A)| \cdot \mu(A) < \infty.$$
So there is $C \geq 1$ such that
$$\sum_{n > C} n \cdot \mu(\{x \in X \: |L(x)| = n\}) < \frac{1}{4}.$$
After rearranging the $L$-data, the new words we construct will have length bounded above by $C + 2$.

We now use the previous lemma to determine how to rearrange the $L$-data. For $n \geq 1$ define $B_n = \{x \in X \: |L(x)| \geq C + n\}$. Then $B_{n+1} \subseteq B_n$. Since $|L(x)| - C = |\{n \geq 1 \: x \in B_n\}|$ when $|L(x)| > C$, we have
$$\mu(B_1) \leq \sum_{n \geq 1} \mu(B_n) = \sum_{n \geq 1} n \cdot \mu(\{x \in X \: |L(x)| = C + n\}) < \frac{1}{4}.$$
Set $T_1 = P_2(B_1, X \setminus B_1)$ and $\theta_1 = \psi(B_1, X \setminus B_1)$. In general, for $n > 1$ define
$$T_n = P_2(B_n, X \setminus (B_1 \cup T_1 \cup \cdots \cup T_{n-1})),$$
$$\theta_n = \psi(B_n, X \setminus (B_1 \cup T_1 \cup \cdots \cup T_{n-1})).$$
By clause (iv) of Lemma \ref{LEM BIJECT}, the function $\theta_n$ can be described by partitioning $P_1(B_n, X \setminus (B_1 \cup T_1 \cup \cdots \cup T_{n-1}))$ into a countable number of pieces and translating each piece by an element of $G$. Since $\theta_n$ is bijective, it follows that $\mu(T_n) = \mu(\theta_n^{-1}(T_n))$. Therefore by clause (i) of Lemma \ref{LEM BIJECT}
$$\mu(T_n) = \mu(\theta_n^{-1}(T_n)) = \mu(P_1(B_n, X \setminus (B_1 \cup T_1 \cup \cdots \cup T_{n-1}))) \leq \mu(B_n).$$
So
$$\mu(B_1 \cup T_1 \cup \cdots \cup T_{n-1}) \leq \mu(B_1) + \mu(B_1) + \mu(B_2) + \cdots + \mu(B_{n-1}) < \frac{1}{2}$$
and hence $\mu(B_n) < \frac{1}{2} < \mu(X \setminus (B_1 \cup T_1 \cup \cdots \cup T_{n-1}))$. Applying clause (ii) of Lemma \ref{LEM BIJECT}, we find that
$$\mu(B_n \setminus P_1(B_n, X \setminus (B_1 \cup T_1 \cup \cdots \cup T_{n-1}))) = 0.$$
Set $E_n = B_n \setminus P_1(B_n, X \setminus (B_1 \cup T_1 \cup \cdots \cup T_{n-1}))$.

We now define a new labeling function $R : X \rightarrow \{1, 2, 3, 4\}^{< \omega}$ by the rule (below the symbol $\juxt$ denotes concatenation of words and $\res$ denotes restriction)
$$R(x) = \begin{cases}
L(x) \res [1, C] & \text{if } x \in B_1 \\
L(x) \juxt 4 \juxt L(\theta_n^{-1}(x))(C + n) & \text{if } x \in T_n \text{ and } \theta_n^{-1}(x) \in B_{n+1}\\
L(x) \juxt 4 \juxt L(\theta_n^{-1}(x))(C + n) \juxt 4 & \text{if } x \in T_n \text{ and } \theta_n^{-1}(x) \not\in B_{n+1}\\
L(x) \juxt 4 & \text{otherwise}.
\end{cases}$$
Notice that in the second and third cases in the definition of $R(x)$ we automatically have $\theta_n^{-1}(x) \in B_n$ since $x \in T_n$. Clearly $|R(x)| < C+3$ for every $x \in X$. So the image of $R$ is finite. Let $\beta$ be the partition of $X$ obtained from $R$, i.e. define the classes of $\beta$ so that $x, y \in X$ lie in the same class of $\beta$ if and only if $R(x) = R(y)$. Then $\beta$ is a finite measurable partition of $X$. We claim that $\beta$ is a generating partition.

By Lemma \ref{LEM GENEQ} and the definition of $\beta$, we have that $\beta$ is a generating partition if and only if there is a set $X' \subseteq X$ with $\mu(X') = 1$ such that for every $x \neq y \in X'$ there is $g \in G$ with $R(g \cdot x) \neq R(g \cdot y)$. Since $\alpha$ is a countable generating partition, there is a set $X'' \subseteq X$ with $\mu(X'') = 1$ such that for all $x \neq y \in X''$ there is $g \in G$ with $\alpha(g \cdot x) \neq \alpha(g \cdot y)$. Set
$$X' = X'' \setminus \left( G \cdot \bigcup_{n \geq 1} E_n \right).$$
Then $\mu(X') = 1$. Fix $x \neq y \in X'$. We proceed by cases to show that there is $g \in G$ with $R(g \cdot x) \neq R(g \cdot y)$.

\underline{Case 1}: There is $n \geq 1$ such that $x$ and $y$ are not $B_n$-symmetric. Let $n \geq 1$ be least such that there is $u \in G$ with $B_n$ containing precisely one of $u \cdot x$ and $u \cdot y$. To be specific, say $u \cdot x \in B_n$ and $u \cdot y \not\in B_n$ (the other case is nearly identical). If $n = 1$ then the letter $4$ appears in $R(u \cdot y)$ but not $R(u \cdot x)$ and thus $R(u \cdot x) \neq R(u \cdot y)$. So suppose that $n > 1$. Since $n$ was chosen to be minimal, we must have that $x$ and $y$ are $B_k$-symmetric for all $k < n$. So by clause (iii) of Lemma \ref{LEM BIJECT} $x$ and $y$ are $T_1$-symmetric. It readily follows from a simple induction argument that $x$ and $y$ are both $B_k$-symmetric and $T_k$-symmetric for every $1 \leq k < n$. So $u \cdot x \in B_n \subseteq B_{n-1}$ implies $u \cdot y \in B_{n-1}$ and clause (v) of Lemma \ref{LEM BIJECT} implies that there is $h \in G$ with $\theta_{n-1}(u \cdot x) = h u \cdot x$ and $\theta_{n-1}(u \cdot y) = h u \cdot y$ (we use here the fact that $x, y \in X'$ implies $u \cdot x, u \cdot y \not\in E_{n-1}$ and thus $\theta_{n-1}(u \cdot x)$ and $\theta_{n-1}(u \cdot y)$ are defined). Then the letter $4$ appears once in $R(hu \cdot x) = R(\theta_{n-1}(u \cdot x))$ (since $u \cdot x \in B_{n-1} \cap B_n$) but appears twice in $R(h u \cdot y) = R(\theta_{n-1}(u \cdot y))$ (since $u \cdot y \in B_{n-1} \setminus B_n$). Thus $R(hu \cdot x) \neq R(hu \cdot y)$.

\underline{Case 2}: For every $n \geq 1$ $x$ and $y$ are $B_n$-symmetric. Fix $u \in G$ with $\alpha(u \cdot x) \neq \alpha(u \cdot y)$ (such a $u$ exists since $x, y \in X' \subseteq X''$). If $u \cdot x$ is not in $B_1$ then neither is $u \cdot y$, and we have that $L(u \cdot x)$ and $L(u \cdot y)$ are prefixes of $R(u \cdot x)$ and $R(u \cdot y)$, respectively. Thus $R(u \cdot x) \neq R(u \cdot y)$ if $u \cdot x \not\in B_1$. So suppose that $u \cdot x \in B_1$. We have $|L(u \cdot x)| = C + n$, where $n$ is maximal with $u \cdot x \in B_n$. Since $x$ and $y$ are $B_k$-symmetric for every $k \geq 1$, we must have $|L(u \cdot x)| = |L(u \cdot y)|$. Since $L(u \cdot x) \neq L(u \cdot y)$ and $|L(u \cdot x)| = |L(u \cdot y)|$, there is $k \geq 1$ with $L(u \cdot x)(k) \neq L(u \cdot y)(k)$. If $k \leq C$ then from the first case in the definition of $R$ it follows that $R(u \cdot x)(k) \neq R(u \cdot y)(k)$ and thus $R(u \cdot x) \neq R(u \cdot y)$. If $k > C$ then $u \cdot x, u \cdot y \in B_{k - C}$. Our symmetry assumption and clause (v) of Lemma \ref{LEM BIJECT} imply that there is $h \in G$ with $\theta_{k-C}(u \cdot x) = hu \cdot x$ and $\theta_{k-C}(u \cdot y) = hu \cdot y$ (as in Case 1, we again use the fact that $u \cdot x, u \cdot y \not\in E_{k-C}$ since $x, y \in X'$). Then $L(u \cdot x)(k)$ is the letter in $R(h u \cdot x)$ which follows the first occurrence of $4$, and $L(u \cdot y)(k)$ is the letter in $R(h u \cdot y)$ which follows the first occurrence of $4$. Therefore $R(hu \cdot x) \neq R(hu \cdot y)$.
\end{proof}

\section{Counter-examples for f-invariant and sofic entropies} \label{SEC COUNTER}

In this section we prove Propositions \ref{INTRO PROPF} and \ref{INTRO PROPS}. We handle f-invariant entropy first.

We remind the reader the definition of an induced action. Let $G$ be a countable group and let $H \leq G$ be a subgroup of finite index. Let $G / H$ denote the set of left $H$-cosets $\{g H \: g \in G\}$, and let $\zeta$ be the uniform probability measure on $G / H$. We let $G$ act on $(G / H, \zeta)$ by defining $g \cdot (a H) = g a H$. Fix any function $\sigma: G / H \rightarrow G$ with $\sigma(H) = 1_G$ and $\sigma(g H) \in g H$ for all $g \in G$. We abuse notation and let $\sigma(g)$ denote $\sigma(g H)$ for $g \in G$. Let $\gamma: (G / H) \times G \rightarrow H$ be the cocycle defined by
$$\gamma(a H, g) = \sigma(g a)^{-1} \cdot g \cdot \sigma(a).$$
If $H$ acts by measure preserving bijections on a probability space $(Y, \nu)$, then we define a measure preserving action of $G$ on the probability space $((G / H) \times Y, \zeta \times \nu)$ by
$$g \cdot (a H, y) = (g a H, \gamma(a H, g) \cdot y).$$
One can check that this is a well defined action of $G$. It is called the action of $G$ \emph{induced} from $H \acts (Y, \nu)$. It is well known that the induced action of $G$ is ergodic if and only if $H \acts (Y, \nu)$ is ergodic \cite{Z84}.

\begin{prop}
Let $G$ be a finitely generated non-cyclic free group. For every $h \in \R$,
$$\sup_{G \acts (X, \mu)} \Delta^*_G(X, \mu) = \sup_{G \acts (X, \mu)} \Delta_G(X, \mu) = + \infty,$$
where the supremums are taken over all essentially free ergodic probability measure preserving actions $G \acts (X, \mu)$ with $f_G(X, \mu)$ defined and $f_G(X, \mu) = h$.
\end{prop}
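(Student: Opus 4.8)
The plan is to realize the required actions as actions of $G$ induced from Bernoulli shifts of normal finite-index subgroups, exploiting two known structural features of f-invariant entropy: its multiplicativity under restriction to a finite-index subgroup, and its behaviour (additive up to an explicit correction term) under disjoint unions. Fix $h \in \R$ and write $r = \rnk(G) \ge 2$. For each large integer $n$, choose a normal subgroup $H \triangleleft G$ of index $n$ (e.g.\ the kernel of a surjection $G \to \Z/n\Z$), so that by the Nielsen--Schreier index formula $H$ is free of rank $r' = 1 + n(r-1)$. Pick a probability vector $\lambda$ of finite support with $\sH(\lambda) = n\big(h + (r-1)\log n\big)$; this is a positive real once $n$ is large (as $r \ge 2$), so such a $\lambda$ exists. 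Let $Y = (L, \lambda)^H$ be the corresponding Bernoulli shift of $H$ and let $G \acts (X, \mu)$ be the action induced from $H \acts Y$. Since $\sH(\lambda) > 0$ and $H$ is infinite, $H \acts Y$ is ergodic and essentially free, hence (as $H$ has finite index) so is $G \acts (X, \mu)$; and the common refinement of $\{\{aH\} \times Y \: aH \in G/H\}$ with the pullback of the canonical Bernoulli generator is, by Lemma \ref{LEM GENEQ}, a generating partition of finite Shannon entropy, so $f_G(X, \mu)$ is defined.

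I would first compute $f_G(X, \mu)$. Since $H$ is normal of index $n$, the restriction of the induced action to $H$ is the disjoint union, with equal weights $1/n$, of $n$ copies of $Y$, the copy over the coset $aH$ being $Y$ with the $H$-action precomposed by the automorphism $h \mapsto \sigma(aH)^{-1} h \sigma(aH)$ of $H$; since twisting a Bernoulli shift by a group automorphism merely permutes coordinates, each such copy is measurably conjugate to $(L, \lambda)^H$, whose f-invariant entropy is $\sH(\lambda)$ \cite{B10a}. Combining the multiplicativity of the f-invariant under restriction to an index-$n$ subgroup with its behaviour under disjoint unions, one gets
$$f_G(X, \mu) = \frac{1}{n}\Big(\sH(\lambda) + (1-r')\log n\Big) = \frac{\sH(\lambda)}{n} - (r-1)\log n = h$$
by the choice of $\lambda$ (using $r' = 1 + n(r-1)$).

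To finish I would bound $\Delta^*_G(X, \mu)$ from below. Let $\alpha$ be any generating partition for $G \acts (X, \mu)$, fix right-coset representatives $g_1, \dots, g_n$ of $H$ in $G$, and put $\tilde\alpha = \bigvee_{i=1}^n g_i \cdot \alpha$; then $H \cdot \tilde\alpha = G \cdot \alpha$, so $\tilde\alpha$ generates for the restriction of the action to $H$, while $\sH(\tilde\alpha) \le n\,\sH(\alpha)$. Restricting $\tilde\alpha$ to any one of the $n$ copies $Y_i$ produces a generating partition for the $H$-action on $Y_i \cong (L, \lambda)^H$, so (with respect to the normalized measure on $Y_i$) $\sH(\tilde\alpha \res Y_i) \ge \Delta^*_H\big((L, \lambda)^H\big) \ge f_H\big((L, \lambda)^H\big) = \sH(\lambda)$; the middle inequality is the elementary bound $f_H \le \sH(\gamma)$ for any generating partition $\gamma$, which follows from the definition of the f-invariant since $f_H \le (1 - 2r')\sH(\gamma) + \sum_{s \in S}\sH(\gamma \vee s\cdot\gamma) \le \sH(\gamma)$ for a free generating set $S$ of $H$. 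Because conditioning on which of the $n$ copies a point lies in only decreases entropy, $n\,\sH(\alpha) \ge \sH(\tilde\alpha) \ge \sH(\lambda)$, hence $\Delta^*_G(X, \mu) \ge \sH(\lambda)/n = h + (r-1)\log n$; and since a finite generator of size $k$ has Shannon entropy at most $\log k$, also $\log \Delta_G(X, \mu) \ge h + (r-1)\log n$. Letting $n \to \infty$ sends both $\Delta^*_G(X, \mu)$ and $\Delta_G(X, \mu)$ to $+\infty$, proving the proposition.

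The step I expect to be the main obstacle is justifying the two structural properties of f-invariant entropy used above: the multiplicativity under restriction to a finite-index subgroup (which I would cite) and, more delicately, the behaviour under disjoint unions, whose verification requires unwinding the definition of the f-invariant and checking that, for the natural generator of a disjoint union of $n$ equally weighted $H$-systems, the defining quantities split additively over the pieces up to a correction term equal to $(1 - r')\log n$. A secondary point requiring care is the identification of the restriction of the induced action to $H$ with the disjoint union of Bernoulli shifts described above (via the coset automorphism twists), since it is precisely this structure that makes those two properties applicable here.
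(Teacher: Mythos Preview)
Your proposal is correct and follows essentially the same approach as the paper: induce from a Bernoulli shift over a normal index-$n$ subgroup with base entropy $n(h+(r-1)\log n)$, compute $f_G$ via the subgroup formula and the ergodic-decomposition formula for the f-invariant (the two structural facts you flag are exactly what the paper cites from \cite{S12a} and \cite{S12b}), and bound $\Delta^*_G$ from below by passing to a single $H$-ergodic component and invoking $f_H\le\sH(\gamma)$. The only cosmetic differences are that the paper cites \cite{S12b} for essential freeness rather than arguing directly, and it runs the entropy inequality via $\sH_{\mu_1}(g_i^{-1}\cdot\alpha)=\sH_{\mu_i}(\alpha)$ instead of your conditioning/concavity phrasing, which amounts to the same bound.
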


\begin{proof}
We will use the notations and definitions of \cite{S12b}. Fix a finitely generated non-cyclic free group $G$, fix $h \in \R$, and let $M > 0$. We will construct an essentially free ergodic action $G \acts (X, \mu)$ such that $f_G(X, \mu)$ is defined, $f_G(X, \mu) = h$, and $\log(\Delta_G(X, \mu)) \geq \Delta^*_G(X, \mu) > M$.

Let $r > 1$ be the rank of $G$. Fix $n > \exp(\frac{M - h}{r-1})$. Let $\nu$ be a probability measure on $\N$ satisfying
$$\sH(\nu) = \sum_{k \in \N} -\nu(k) \cdot \log(\nu(k)) =  n \cdot h + n (r - 1) \cdot \log(n).$$
Notice that the right hand side is positive since $n > \exp(\frac{-h}{r-1})$ and thus such a probability measure $\nu$ exists. Let $K$ be a normal subgroup of $G$ with $|G : K| = n$. Consider the Bernoulli shift $K \acts (\N^K, \nu^K)$. By \cite{B10a} we have that $f_K(\N^K, \nu^K)$ is defined and
$$f_K(\N^K, \nu^K) = \sH(\nu) = n \cdot h + n (r - 1) \cdot \log(n).$$
Let $\zeta$ be the uniform probability measure on $G / K$, set $(X, \mu) = ((G / K) \times \N^K, \zeta \times \nu^K)$, and let $G \acts (X, \mu)$ be the action of $G$ induced from $K \acts (\N^K, \nu^K)$. Since $K \acts (\N^K, \nu^K)$ is ergodic, $G \acts (X, \mu)$ is ergodic as well. It is easy to see that $K \acts (X, \mu)$ has $n$ ergodic components, namely $\{g K\} \times \N^K$ for $g K \in G / K$, and the action of $K$ on any of its ergodic components is measurably conjugate to $K \acts (\N^K, \nu^K)$. The sets $\{(a K, y) \in (G / K) \times \N^K \: a K = g K, \ y(1_K) = t\}$ for $g K \in G / K$ and $t \in \N$ form a generating partition for $K \acts (X, \mu)$, and it is readily checked that this partition has Shannon entropy $\sH(\nu) + \log(n) < \infty$. Therefore $f_K(X, \mu)$ is defined. The rank of $K$, $r_K$, is related to its index, $n$, by the formula $r_K = n \cdot (r - 1) + 1$ [Proposition I.3.9 \cite{LS77}]. So by the ergodic decomposition formula \cite{S12b}
$$f_K(X, \mu) = f_K(\N^K, \nu^K) - (r_K - 1) \log(n) = f_K(\N^K, \nu^K) - n (r - 1) \log(n) = n \cdot h.$$
Now by the subgroup formula \cite{S12a} we have
$$f_G(X, \mu) = \frac{1}{n} \cdot f_K(X, \mu) = h.$$
Since $\mu$ has no atoms, it follows from the main theorem of \cite{S12b} that $G \acts (X, \mu)$ is essentially free.

Now suppose that $\alpha$ is a generating partition for $G \acts (X, \mu)$ with $\sH(\alpha) < \infty$. Fix $g_1, g_2, \ldots, g_n \in G$ with $g_1 = 1_G$ and $G / K = \{g_i K \: 1 \leq i \leq n\}$. Enumerate the $K$-ergodic measures in the support of $\mu$ as $\mu_1, \mu_2, \ldots, \mu_n$ so that $\mu_i(\{g_i K\} \times \N^K) = 1$ for each $1 \leq i \leq n$. Notice that $\mu_1 = g_i^{-1} \cdot \mu_i$. By \cite[Lemma 4.2 (ii)]{S12b} we have
$$\frac{1}{n} \cdot \sum_{i = 1}^n \sH_{\mu_i}(\alpha) \leq \sH_\mu(\alpha).$$
Let $\beta$ be the restriction of $\bigvee_{i = 1}^n g_i^{-1} \cdot \alpha$ to $\{K\} \times \N^K$. Then
$$\sH_{\mu_1}(\beta) \leq \sum_{i = 1}^n \sH_{\mu_1}(g_i^{-1} \cdot \alpha) = \sum_{i = 1}^n \sH_{\mu_i}(\alpha) \leq n \cdot \sH(\alpha).$$
Since $K$ is normal we have
$$G \cdot \alpha = K \cdot \{g_1, g_2, \ldots, g_n\} \cdot \alpha$$
and hence $\beta$ is a generating partition for $K \acts (\{K\} \times \N^K, \mu_1)$. It follows that $f_K(\{K\} \times \N^K, \mu_1) \leq \sH_{\mu_1}(\beta)$ \cite{B10a}. However, $K \acts (\{K\} \times \N^K, \mu_1)$ is measurably conjugate to $K \acts (\N^K, \nu^K)$ and therefore
$$n \cdot h + n (r - 1) \cdot \log(n) = f_K(\N^K, \nu^K) = f_K(\{K\} \times \N^K, \mu_1) \leq \sH_{\mu_1}(\beta) \leq n \cdot \sH(\alpha).$$
Since $\alpha$ was an arbitrary finite Shannon entropy generating partition for $G \acts (X, \mu)$ it follows that
$$\Delta^*_G(X, \mu) \geq h + (r - 1) \cdot \log(n) > M.$$
Finally, we of course always have the inequality $\log(\Delta_G(X, \mu)) \geq \Delta^*_G(X, \mu)$.
\end{proof}

We point out that the action $G \acts (X, \mu)$ constructed in the above proof factors onto $G \acts (G / K, \zeta)$ which has f-invariant entropy $f_G(G / K, \zeta) = - (r - 1) \cdot \log(n) < 0$ (\cite[Lemma 2.3]{S12b}). We do not know if the above proposition is still true if in addition to picking $h \in \R$ one picks a constant $c > 0$ and considers essentially free ergodic actions $G \acts (X, \mu)$ which not only satisfy $f_G(X, \mu) = h$ but also satisfy $f_G(Y, \nu) \geq - c$ for every factor $(Y, \nu)$ of $(X, \mu)$. This additional requirement could potentially lead to a result similar to Krieger's finite generator theorem.

Now we consider the case of sofic entropy.

\begin{lem}
There is a sofic group $G$, a sofic approximation sequence $\Sigma$ to $G$, and a normal subgroup $K \lhd G$ of finite index such that $h^\Sigma_G(G / K, \zeta) = - \infty$, where $\zeta$ is the uniform probability measure on $G / K$.
\end{lem}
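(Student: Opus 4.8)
The plan is to find a sofic group $G$ with a finite-index normal subgroup $K$ for which the quotient action $G \acts (G/K, \zeta)$ has negative-infinity sofic entropy with respect to some sofic approximation sequence. Sofic entropy equals $-\infty$ precisely when the sofic approximation sequence $\Sigma = (\sigma_i : G \to \Sym(d_i))$ admits, in the limit, no good models for the action at all: for the finite partition $\alpha$ of $G/K$ into singletons (which is generating for this finite action), there are asymptotically no maps $\phi : \{1, \dots, d_i\} \to G/K$ that are approximately equivariant and approximately measure-preserving. So the whole problem is to engineer a sofic approximation to $G$ that is ``incompatible'' with the quotient map $G \to G/K$ in the sense that the permutations $\sigma_i(g)$ for $g \in K$ fail to have approximately the right fixed-point structure — roughly, $\sigma_i(g)$ for $g \in K$ should not look like it fixes a $(1/|G:K|)$-fraction of coordinates in a way consistent with $G$-equivariance onto $G/K$.

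First I would reduce to the combinatorial/group-theoretic core. A sofic approximation $\Sigma$ of $G$ factors through a sofic approximation $\bar\Sigma$ of the finite quotient $G/K$ (by composing $\sigma_i$ with... no, that goes the wrong way). Instead: a good model for $G \acts (G/K, \zeta)$ relative to $\sigma_i$ would, after pulling back, force the partition of $\{1,\dots,d_i\}$ into $|G:K|$ classes $\phi^{-1}(gK)$ of nearly equal size on which the $\sigma_i(g)$ act (approximately) by the regular permutation action of $G/K$. The key point is that this in turn would force, for $g \in K$, that $\sigma_i(g)$ approximately preserves each class, i.e.\ $\sigma_i(g)$ fixes at most $o(d_i)$ points outside the ``diagonal'' behavior... the real constraint is on how $\sigma_i$ restricted to the coset classes interleaves. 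The cleanest route is: build $G$ and $\Sigma$ so that $G/K$ has a known obstruction to being ``locally realized'' — for instance, take $K$ of index $2$ and arrange that every $\sigma_i(g)$ for $g \notin K$ is forced to have many fixed points (so it cannot approximate a fixed-point-free involution on the $2$-coloring), which kills any good model.

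The concrete construction I would attempt: let $G = \Z_2 * \Z_2 * \cdots$ or more simply take a residually finite (hence sofic) group with a surjection onto $\Z_2$, and choose the sofic approximation sequence $\sigma_i$ coming from a sequence of finite quotients $G \to G_i$ where the image of some element of $G \setminus K$ in $G_i$ is an involution with a growing number of fixed points (e.g.\ a transposition), while $\sigma_i$ is still a genuine homomorphism $G \to \Sym(G_i)$ via left-multiplication, hence automatically a sofic approximation. Then for $g \in G \setminus K$, $\sigma_i(g)$ fixes a positive fraction of $G_i$, whereas any good model of $G \acts (G/K, \zeta)$ would require $\sigma_i(g)$ to act nearly fixed-point-freely (swapping the two color classes); contradiction, so no good models exist and $h^\Sigma_G(G/K,\zeta) = -\infty$. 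One must check that such finite quotients $G_i$ with the desired fixed-point growth exist and that the resulting sequence is genuinely sofic (injectivity in the limit), which is where residual finiteness or an explicit amalgam is used.

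The main obstacle I expect is the simultaneous satisfaction of three constraints on the sequence $(\sigma_i)$: (1) it must be a legitimate sofic approximation to $G$ (asymptotically free and asymptotically multiplicative — automatic if the $\sigma_i$ are honest homomorphisms into $\Sym(G_i)$ arising from finite quotients, provided $G$ is residually finite, which pins down the choice of $G$); (2) the elements of $G \setminus K$ must map to permutations with $\Omega(d_i)$ fixed points, so as to be incompatible with the free $G/K$-action on the color classes; and (3) $K$ itself must be nontrivial and of finite index with $G/K$ not accidentally trivial. Balancing (1) and (2) is the crux: honest left-translation actions of $G$ on $G_i$ are fixed-point-free for every nonidentity element, which is exactly the wrong thing, so I would instead use a non-regular action — e.g.\ the action of $G_i$ on $G_i / L_i$ for a well-chosen subgroup $L_i$ with $g \notin L_i^{G_i}$-conjugates structured so that $g$ has many fixed points — and then separately verify asymptotic freeness of the combined sequence. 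Getting a single group $G$ (sofic, with the right finite quotients available) for which all of this works out is the real content of the lemma.
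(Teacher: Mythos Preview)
Your proposed obstruction mechanism cannot work: you want to arrange that for some fixed $g \in G \setminus K$ the permutation $\sigma_i(g)$ has $\Omega(d_i)$ fixed points, but this is flatly incompatible with the asymptotic freeness clause in the definition of a sofic approximation, which requires that for every nonidentity $g \in G$ the proportion of fixed points of $\sigma_i(g)$ tends to $0$ (i.e.\ $o(d_i)$ fixed points). You flag this tension as ``the crux,'' but it is not a difficulty to be balanced---it is a contradiction. No choice of coset actions $G_i/L_i$ can make a single nonidentity element have a positive limiting fixed-point proportion while still yielding a sofic approximation; so condition (2) in your list can never be met alongside (1), and the argument does not get off the ground.

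The paper takes an entirely different route. It lets $G$ be a finitely generated non-cyclic free group of rank $r > 1$ and $K \lhd G$ of index $2$, and first computes (via \cite{S12b}) that the f-invariant entropy is $f_G(G/K, \zeta) = -(r-1)\log 2 < 0$. It then invokes Bowen's theorem \cite{B10c}, which expresses f-invariant entropy as an average of sofic-entropy-type model counts over \emph{random} homomorphisms $\sigma_i : G \to \Sym(m_i)$. Negativity of $f_G$ forces the model count to be zero along a suitable subsequence of such homomorphisms, and that subsequence is the desired sofic approximation $\Sigma$ with $h^\Sigma_G(G/K, \zeta) = -\infty$. The obstruction to good models here is global and statistical---a generic homomorphism from a free group into a large symmetric group admits no approximately equivariant $2$-coloring---rather than a local fixed-point count for any single element. (The paper also remarks that sofic approximations coming from expander graphs should produce the same phenomenon; that is a different global obstruction, closer in spirit to what you may have been reaching for, but still not a fixed-point argument.)
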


\begin{proof}
For the sake of brevity we give a simple example of such a group. However the situation described in the lemma should occur whenever $G$ has a subgroup of finite index and admits a sofic approximation sequence coming from a sequence of expander graphs.

We will use some of the notation and definitions from \cite{B10c}. Let $G$ be a finitely generated non-cyclic free group. It is well known that such groups are sofic. Fix a normal subgroup $K$ of $G$ of index $2$. Let $\zeta$ be the uniform probability measure on $G / K$. By \cite[Lemma 2.3]{S12b}, the f-invariant entropy of this action is
$$f_G(G / K, \zeta) = -(r - 1) \cdot \log(2) < 0,$$
where $r > 1$ is the rank of $G$. Let $\phi : G / K \rightarrow \{0, 1\}$ be a bijection. In \cite{B10c}, Bowen proved that f-invariant entropy can be obtained by considering random homomorphisms into finite symmetric groups and then performing computations similar to those used in defining sofic entropy. In particular, his theorem implies that we can find a sofic approximation sequence $\Sigma = \{\sigma_i \: i \in \N\}$ to $G$ consisting of homomorphisms $\sigma_i : G \rightarrow \Sym(m_i)$ such that
$$|\psi: \{1, 2, \ldots, m_i\} \rightarrow \{0, 1\} \: d^H_{\sigma_i} (\phi, \psi) \leq \epsilon\}| = 0$$
for sufficiently small $\epsilon > 0$, sufficiently large finite sets $H \subseteq G$, and sufficiently large $i \in \N$. Using the definition of sofic entropy given in \cite{B10b}, it immediately follows from the previous sentence that $h^\Sigma_G(G / K, \zeta) = -\infty$.
\end{proof}

\begin{prop}
There exists a sofic group $G$, a sofic approximation sequence $\Sigma$ to $G$, and an essentially free ergodic action $G \acts (X, \mu)$ such that $h^\Sigma_G(X, \mu) = -\infty$ but $\Delta^*_G(X, \mu) = \Delta_G(X, \mu) = +\infty$.\end{prop}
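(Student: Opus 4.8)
The plan is to feed the construction from the proof of Proposition~\ref{INTRO PROPF} into the preceding lemma, using a Bernoulli base of \emph{infinite} Shannon entropy so that no finite-Shannon-entropy generating partition can survive the restriction argument, while arranging a factor map onto $G/K$ so that the sofic entropy collapses to $-\infty$.

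Concretely, I would take $G$, $\Sigma$, and $K \lhd G$ as in the preceding lemma, so $h^\Sigma_G(G/K, \zeta) = -\infty$ where $\zeta$ is the uniform measure on $G/K$; put $n = |G : K|$, and recall $K$ is itself a finitely generated non-cyclic free group. Fix a probability measure $\nu$ on $\N$ with $\sH(\nu) = \infty$ (so $\nu$ is not a point mass and $\nu^K$ is non-atomic), set $(X, \mu) = ((G/K) \times \N^K, \zeta \times \nu^K)$, and let $G \acts (X, \mu)$ be the action induced from the Bernoulli shift $K \acts (\N^K, \nu^K)$. This action is ergodic because the Bernoulli shift is \cite{Z84}; it is essentially free because the Bernoulli shift $K \acts (\N^K, \nu^K)$ is essentially free ($K$ is infinite, $\nu$ nontrivial) and this passes to the induced action (if $g \in G$ fixes $(aK, y)$ then $gaK = aK$, so $g \in K$ and the cocycle value $\gamma(aK, g) = \sigma(a)^{-1} g \sigma(a)$ is a conjugate of $g$ lying in $K$ and fixing $y$, forcing $g = 1_G$ for a.e.\ $y$); and projection to the first coordinate is a $G$-equivariant factor map of $(X, \mu)$ onto $(G/K, \zeta)$.

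First I would deduce $h^\Sigma_G(X, \mu) = -\infty$: choosing a partition of $X$ refining the pullback of a partition of $G/K$ and using strong enough parameters, any good sofic model of $(X, \mu)$ projects through the factor map to a good sofic model of $(G/K, \zeta)$, and since $h^\Sigma_G(G/K, \zeta) = -\infty$ there are none of the latter, hence none of the former \cite{B10b, KL11a}. Then I would show $\Delta^*_G(X, \mu) = +\infty$ by adapting Proposition~\ref{INTRO PROPF}: if $\alpha$ were a generating partition for $G \acts (X, \mu)$ with $\sH_\mu(\alpha) < \infty$, then since $K$ is normal the sets $\{gK\} \times \N^K$ are exactly the $K$-ergodic components of $\mu$, with $K$ acting on each measurably conjugately to $K \acts (\N^K, \nu^K)$; choosing coset representatives $g_1 = 1_G, \dots, g_n$ and restricting $\bigvee_{i=1}^n g_i^{-1} \cdot \alpha$ to $\{K\} \times \N^K$ would yield, exactly as in that proof (using $G \cdot \alpha = K \cdot \{g_1, \dots, g_n\} \cdot \alpha$ and \cite[Lemma 4.2(ii)]{S12b}), a generating partition $\beta$ for $K \acts (\{K\} \times \N^K, \mu_1) \cong K \acts (\N^K, \nu^K)$ with $\sH_{\mu_1}(\beta) \leq n \cdot \sH_\mu(\alpha) < \infty$. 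But a Bernoulli shift over the non-cyclic free group $K$ with $\sH(\nu) = \infty$ admits no generating partition of finite Shannon entropy: such a $\beta$ would make $f_K(\N^K, \nu^K)$ defined with $f_K(\N^K, \nu^K) \leq \sH_{\nu^K}(\beta) < \infty$ \cite{B10a}, whereas for each $m$ the coordinatewise map collapsing $\{m+1, m+2, \dots\}$ to a single symbol exhibits a Bernoulli factor of $K \acts (\N^K, \nu^K)$ with f-invariant entropy $\sH(\nu_m) \to \sH(\nu) = \infty$, contradicting that f-invariant entropy does not increase under factor maps. Thus $\Delta^*_G(X, \mu) = +\infty$, and since finite partitions have finite Shannon entropy, $\Delta_G(X, \mu) = +\infty$ as well.

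The hardest part will be this last step: verifying that the infinite-entropy Bernoulli shift over $K$ carries no finite-Shannon-entropy generating partition. It rests on monotonicity of f-invariant entropy under factor maps (for actions admitting a finite-Shannon-entropy generator) together with Bowen's identification of the f-invariant entropy of Bernoulli shifts \cite{B10a}, and one should confirm that the relevant statements of \cite{B10a} apply when the base measure has infinite Shannon entropy (for the specific factor relationship used here, $\N^K$ is a relatively Bernoulli extension of its truncation, which makes the needed inequality more robust); failing a clean citation, this fact should be isolated as a lemma and proved directly by the methods of \cite{B10a}.
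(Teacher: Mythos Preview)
Your overall architecture matches the paper's proof: induce an infinite-entropy Bernoulli shift from $K$ up to $G$, use the factor map onto $(G/K,\zeta)$ to force $h^\Sigma_G(X,\mu)=-\infty$, and use the coset-translate restriction argument from Proposition~\ref{INTRO PROPF} to reduce ``no finite-Shannon-entropy generator for $(X,\mu)$'' to the same statement for the Bernoulli base. The paper uses $([0,1]^K,\lambda^K)$ rather than $(\N^K,\nu^K)$ with $\sH(\nu)=\infty$, but that is immaterial.

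The genuine gap is in your final step. You argue that a hypothetical finite-entropy generator $\beta$ would make $f_K(\N^K,\nu^K)$ defined and $\leq \sH(\beta)<\infty$, and then derive a contradiction from the truncation factors having $f_K$-entropy $\sH(\nu_m)\to\infty$, ``contradicting that f-invariant entropy does not increase under factor maps.'' But f-invariant entropy \emph{does} increase under factor maps: the Ornstein--Weiss map exhibits $(\Z/2\Z\times\Z/2\Z)^{F_r}$ (Bernoulli, entropy $2\log 2$) as a factor of $(\Z/2\Z)^{F_r}$ (entropy $\log 2$). So the monotonicity you invoke is false, and your hedge about the extension being relatively Bernoulli does not rescue it---no such monotonicity result for f-invariant entropy is available in the cited literature.

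The paper closes this gap differently: it cites Kerr--Li \cite{KL11b}, who prove (via sofic entropy, which \emph{is} bounded above by the Shannon entropy of any generating partition) that a Bernoulli shift over a sofic group with infinite base entropy admits no generating partition of finite Shannon entropy. Since free groups are sofic, this applies to $K$ and finishes the argument cleanly. You should replace your f-invariant monotonicity step with this citation.
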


\begin{proof}
Take any sofic group $G$ and sofic approximation sequence $\Sigma$ to $G$ with the properties that there is a normal subgroup $K \lhd G$ of finite index such that $h^\Sigma_G(G / K, \zeta) = -\infty$, where $\zeta$ is the uniform probability measure on $G / K$. Consider the Bernoulli shift $K \acts ([0, 1]^K, \lambda^K)$, where $\lambda$ is Lebesgue measure on the interval $[0, 1]$. Let $G \acts (X, \mu)$ be the induced action of $G$, where $X = (G / K) \times [0, 1]^K$ and $\mu = \zeta \times \lambda^K$. Since $K \acts ([0, 1]^K, \lambda^K)$ is ergodic, so is $G \acts (X, \mu)$. If $g \in G$ has a non-trivial power $g^n$ lying in $K$, then the set of points in $X$ fixed by $g$ must have measure $0$ since the action of $K$ is essentially free. On the other hand, if $g$ has no non-trivial power lying in $K$, then $g$ acts freely on $G / K$ (since $K$ is normal) and thus acts freely on $X$. So the action $G \acts (X, \mu)$ is essentially free. Clearly $G \acts (X, \mu)$ factors onto $G \acts (G / K, \zeta)$. Let $\xi$ be the finite partition of $X$ associated to this factor map. Using Kerr's definition of sofic entropy \cite{Ke}, we can work with partitions finer than $\xi$ and use the fact that $h^\Sigma_G(G / K, \zeta) = -\infty$ to quickly obtain $h^\Sigma_G(X, \mu) = -\infty$. As was shown in the proof of the previous corollary, any finite Shannon entropy generator for $G \acts (X, \mu)$ would provide a finite Shannon entropy generator for $K \acts ([0, 1]^K, \lambda^K)$ (see the role of $\alpha$ and $\beta$ in that proof). However, Kerr and Li \cite{KL11b} proved that $K \acts ([0, 1]^K, \lambda^K)$ does not admit any generating partition having finite Shannon entropy (we use here the fact that a subgroup of a sofic group is sofic \cite{P08}). Thus $\Delta^*_G(X, \mu) = \Delta_G(X, \mu) = + \infty$.
\end{proof}

It is unknown to the author if Krieger's theorem and Rohlin's theorem hold for actions of sofic groups for which the sofic entropy is not negative infinity. We mention that the potential dependence of sofic entropy on the choice of a sofic approximation sequence clearly poses a potential obstruction to Rohlin's theorem.

\thebibliography{9}

\bibitem{B10a}
L. Bowen,
\textit{A new measure conjugacy invariant for actions of free groups}, Annals of Mathematics 171 (2010), no. 2, 1387--1400.

\bibitem{B10b}
L. Bowen,
\textit{Measure conjugacy invariants for actions of countable sofic groups}, Journal of the American Mathematical Society 23 (2010), 217--245.

\bibitem{B10c}
L. Bowen,
\textit{The ergodic theory of free group actions: entropy and the f-invariant}, Groups, Geometry, and Dynamics 4 (2010), no. 3, 419--432.

\bibitem{DP02}
A. Danilenko and K. Park,
\textit{Generators and Bernoullian factors for amenable actions and cocycles on their orbits}, Ergod. Th. \& Dynam. Sys. 22 (2002), 1715--1745.

\bibitem{D74}
M. Denker,
\textit{Finite generators for ergodic, measure-preserving transformations}, Prob. Th. Rel. Fields 29 (1974), no. 1, 45--55.

\bibitem{K95}
A. Kechris,
Classical Descriptive Set Theory. Springer-Verlag, New York, 1995.

\bibitem{Ke}
D. Kerr,
\textit{Sofic measure entropy via finite partitions}, preprint. http://arxiv.org/abs/1111.1345.

\bibitem{KL11a}
D. Kerr and H. Li,
\textit{Entropy and the variational principle for actions of sofic groups}, Invent. Math. 186 (2011), 501--558.

\bibitem{KL11b}
D. Kerr and H. Li,
\textit{Bernoulli actions and infinite entropy}, Groups Geom. Dyn. 5 (2011), 663--672.

\bibitem{Kr70}
W. Krieger,
\textit{On entropy and generators of measure-preserving transformations}, Trans. Amer. Math. Soc. 149 (1970), 453--464.

\bibitem{LS77}
R. Lyndon and P. Schupp,
Combinatorial Group Theory. Springer-Verlag, New York, 1977.

\bibitem{P08}
V. Pestov,
\textit{Hyperlinear and sofic groups: A brief guide}, Bull. Symbolic Logic 14 (2008), no. 4, 449--480.

\bibitem{Roh67}
V. A. Rohlin,
\textit{Lectures on the entropy theory of transformations with invariant measure}, Uspehi Mat. Nauk 22 (1967), no. 5, 3--56.

\bibitem{Ros88}
A. Rosenthal,
\textit{Finite uniform generators for ergodic, finite entropy, free actions of amenable groups}, Prob. Th. Rel. Fields 77 (1988), 147--166.

\bibitem{S12a}
B. Seward,
\textit{A subgroup formula for f-invariant entropy}, preprint. http://arxiv.org/abs/1202.5071.

\bibitem{S12b}
B. Seward,
\textit{Actions with finite f-invariant entropy}, preprint. http://arxiv.org/abs/1205.5090.

\bibitem{Su83}
\v{S}tefan \v{S}ujan,
\textit{Generators for amenable group actions}, Mh. Math. 95 (1983), no. 1, 67--79.

\bibitem{Th75}
J.-P. Thouvenot,
\textit{Quelques proprietes des systemes dynamiques qui se decomposent en un produit de deux systemes dont l'un est un schema de Bernoulli}, Israel J. Math. 21 (1975), 177--207.

\bibitem{Z84}
R. J. Zimmer,
Ergodic Theory and Semisimple Groups. Monographs in Mathematics, 81. Birkhuser Verlag, Basel, 1984.

\end{document}